\def\dive{\operatorname{div}}
\numberwithin{equation}{section}
\newtheorem{theorem}{Theorem}[section]
\newtheorem{lemma}[theorem]{Lemma}
\newtheorem{definition}[theorem]{Definition}
\newtheorem{proposition}[theorem]{Proposition}
\newtheorem{remark}[theorem]{Remark}
\newcommand{ \mint }{ {\int\hspace{-0.38cm}-}}
\begin{document}
	
	\title[\hfil Harnack inequality for the nonlocal equations\dots] {Harnack inequality for the nonlocal equations with general growth}
	
	\author[Y. Fang and C. Zhang  \hfil \hfilneg]{Yuzhou Fang and  Chao Zhang$^*$}
	
	\thanks{$^*$ Corresponding author.}
	
	\address{Yuzhou Fang \hfill\break School of Mathematics, Harbin Institute of Technology, Harbin 150001, China}
	\email{18b912036@hit.edu.cn}

	\address{Chao Zhang  \hfill\break School of Mathematics and Institute for Advanced Study in Mathematics, Harbin Institute of Technology, Harbin 150001, China}
	\email{czhangmath@hit.edu.cn}

	\subjclass[2010]{35R11; 47G20; 35D30; 35B65; 46E30}
	\keywords{Harnack inequality; nonlocal equations; fractional Orlicz-Sobolev space; general growth; expansion of positivity}
	
	\maketitle
	
	\begin{abstract}
	We consider a class of generalized nonlocal $p$-Laplacian equations. We find some proper structural conditions to establish a version of nonlocal Harnack inequalities of weak solutions to such nonlocal problems by using the expansion of positivity and energy estimates.
	\end{abstract}
	
	\section{Introduction}
	\label{sec-0}
	
   Let $\Omega\subset\mathbb{R}^n \,(n\geq2)$ be a bounded domain.	In this paper, we are interested in the following class of integro-differential equations with general growth
	\begin{equation}
	\label{main}
	\mathcal{L}u=0    \quad\text{in } \Omega
	\end{equation}
	with
	$$
	\mathcal{L}u(x):=\mathrm{P.V.}\int_{\mathbb{R}^n}g\left(\frac{|u(x)-u(y)|}{|x-y|^s}\right)\frac{u(x)-u(y)}{|u(x)-u(y)|}
	\frac{K(x,y)}{|x-y|^s}\,dy,
	$$
	where the symbol $\mathrm{P.V.}$ represents ``in the principal value sense",  $s\in(0,1)$ and the function $K(x,y):\mathbb{R}^n\times\mathbb{R}^n\rightarrow(0,\infty]$ is a symmetric measurable kernel such that
	\begin{equation}
	\label{0-2}
	\frac{\Lambda^{-1}}{|x-y|^n}\leq K(x,y)\leq \frac{\Lambda}{|x-y|^n}, \quad \Lambda\geq1.
	\end{equation}
	Particularly when $\Lambda=1$, Eq. \eqref{main} is called $s$-fractional $G$-Laplace equation. The function $g:[0,\infty)\rightarrow[0,\infty)$ is continuous and strictly increasing fulfilling $g(0)=0$, $\lim_{t\rightarrow\infty}g(t)=\infty$ and
	\begin{equation}
	\label{0-3}
	1<p\leq\frac{tg(t)}{G(t)}\leq q<\infty  \quad  \text{ with }  G(t)=\int_0^tg(\tau)d\tau,
	\end{equation}
	where $G(\cdot)$ is an $N$-function possessing the $\Delta_2$ and $\nabla_2$ conditions (see Section \ref{sec-1}). 
	
	
	In recent years, a great attention has been concentrated on the nonlocal $p$-Laplacian problems, which is the special case that $g(t)=t^{p-1}$. For the regularity theory on this kind of problems, Kassmann \cite{Kas} proved the nonlocal Harnack inequality with tail-term for the fractional Laplacian. Di Castro-Kuusi-Palatucci \cite{DKP16} further investigated the local behaviour of weak solutions incorporating boundedness and H\"{o}lder continuity in the spirit of De Giorgi-Nash-Moser iteration; see also \cite{DKP14} for the nonlocal Harnack inequalities. The H\"{o}lder regularity up to the boundary was whereafter showed by Iannizzotto-Mosconi-Squassina \cite{IMS16}. We also refer the readers to \cite{BL17} for higher Sobolev regularity, \cite{KMS15} for self-improving properties, \cite{KKL19,KKP17} for the viscosity and potential theory, \cite{BP16,FP14} for fractional $p$-eigenvalue problems. When it comes to the parabolic counterpart, several features of solutions have already been studied, such as the local regularity \cite{BLS,DZZ21,Str19} and the well-posedness \cite{MRT16,Vaz16}. For more results on the nonlocal nonlinear problems of the $p$-Laplacian type, one can see for instance \cite{KKP16,KMS2015,Pal18,CCV11,FK13,Vaz21}.
	
	When $g(\cdot)$ carries a more general structure, Eq. \eqref{main} can be viewed naturally as the nonlocal analogue of the $G$-Laplace equation whose classical model is
	\begin{equation}
	\label{0-3-1}
	-\dive\left(g(|\nabla u|)\frac{\nabla u}{|\nabla u|} \right)=0 \quad\text{with } g(t)=G'(t).
	\end{equation}
	The so-called $G$-Laplace equations have been extensively studied over the past years. The regularity theory, especially for the scenario that $g(t)\approx t^p+t^q$, is initially explored by the celebrated papers of Marcellini \cite{Mar89,Mar91}. More results on the generalized $p$-Laplace equations can be found in \cite{DSV09,HHL21,Sal21}. On the other hand, Fern\'{a}ndez Bonder-Salort-Vivas \cite{FSV20} established the H\"{o}lder continuity for weak solutions to the fractional $g$-Laplacian with Dirichlet boundary values; see also \cite{FSV21} for the global regularity of eigenfunctions.  Chaker-Kim-Weidner \cite{CKW21} proved, via De Giorgi classes, the interior regularity properties for the nonlocal functionals with $(p,q)$-growth and related equations. More recently, the weak solutions to \eqref{main} were proved to be locally bounded and H\"{o}lder continuous in \cite{BKO21} under the assumption \eqref{0-3}. Regarding further studies of the nonlocal problems possessing non-standard growth, including also double phase equations and equations with variable exponents, one can refer to \cite{DeFP19,FZ21,BOS21,CK21,O21,GKS20,GKS21b,GKS21a} and references therein.
	
	Although pretty abundant research results have been obtained for the nonlocal problems with non-standard growth, to the best of our knowledge, there are few results regarding the pointwise estimates such as the Harnack inequalities. To this end, our aim of this manuscript is to investigate Harnack estimate for Eq. \eqref{main}, which can be regarded as a natural outgrowth of the result in \cite{DKP14}. Due to the possibly inhomogeneous growth of the function $G$, we have to explore the suitable conditions on $G$ in order to infer the desired result. Additionally, we need require that the Young functions $G$ satisfies the following conditions:
	\begin{equation}
	\label{0-4}
	G(t\tau)\leq c_0G(t)G(\tau)
	\end{equation}
	and
	\begin{equation}
	\label{0-5}
	\min\{t^p,t^q\}\leq G(t)\leq c_1\max\{t^p,t^q\}
	\end{equation}
	for any $t,\tau\geq0$ and $c_0,c_1$ being two positive constants. Examples of $G$ satisfying the requirements \eqref{0-3}, \eqref{0-4} and \eqref{0-5} include 
	\begin{itemize}
		\item $G(t)=t^p$, $t\ge 0$, $p>1$;
		\item $G(t)=\max\{t^p,t^q\}$, $t\ge 0$, $1<p\le q<\infty$;
		\item $G(t)=t^p+a_0t^q \text{ with } a_0>0$, $t\ge 0$, $1<p\le q<\infty$;
		\item  $G(t)=t^p\log(e+t)$, $t\ge 0$, $p>1$.
	\end{itemize}

	Before giving our main result, we introduce the so-called ``tail space",
	$$
	L^g_s(\mathbb{R}^n)=\left\{u \text{ is measurable function in } \mathbb{R}^n: \int_{\mathbb{R}^n}
	g\left(\frac{|u(x)|}{(1+|x|)^s}\right)\,\frac{dx}{(1+|x|)^{n+s}}<\infty\right\}.
	$$
	The corresponding nonlocal tail of $u$ is given by
	\begin{equation}
	\label{1}
	\mathrm{Tail}(u;x_0,R)=\int_{\mathbb{R}^n\setminus B_R(x_0)}g\left(\frac{|u(x)|}{|x-x_0|^s}\right)\,\frac{dx}{|x-x_0|^{n+s}}.
	\end{equation}
	Notice that $u\in L^g_s(\mathbb{R}^n)$ if and only if $\mathrm{Tail}(u;x_0,R)$ is finite for any $x_0\in \mathbb{R}^n$ and $R>0$. The details can be found in \cite[Subsection 2.3]{BKO21}.
	
	\medskip
	
	Now we are in a position to state the main result as follows.
	
	\begin{theorem}
		\label{thm0}
		Suppose that $s\in(0,1)$ and the assumptions \eqref{0-3}, \eqref{0-4} and \eqref{0-5} are in force. Let $u\in \mathbb{W}^{s,G}(\Omega)\cap L^g_s(\mathbb{R}^n)$ be a weak solution of Eq. \eqref{main} such that $u\geq0$ in $B_R:=B_R(x_0)\subset\Omega$. Then, for every $B_r:=B_r(x_0)\subset B_\frac{R}{2}(x_0)$, we have the following nonlocal Harnack inequality
		\begin{align*}
		\sup_{B_r}u&\leq Cr^{s(1-\frac{q}{p})\frac{q}{\epsilon}}\max_{\iota\in\{1,\frac{q}{p},\frac{p}{q}\}}\left\{\left(\inf_{B_r}u+r^sg^{-1}(r^s\mathrm{Tail}(u_-;x_0,R))\right)^\iota\right\}\\
		&\quad+Cr^sg^{-1}(r^s\mathrm{Tail}(u_-;x_0,R)),
		\end{align*}
		where  {\rm Tail$(\cdot)$} is defined in \eqref{1}, $u_-:=\max\{-u,0\}$, the positive constant $C$ depends on $n,p,q,s,\Lambda$ as well as the structural constants $c_0,c_1$ given by \eqref{0-4} and \eqref{0-5}, and the absolute constant $\epsilon\in(0,1)$, coming from Lemma \ref{lem2-3} below, is a priori determined by $n,p,q,s,\Lambda$.
	\end{theorem}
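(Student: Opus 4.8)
The plan is to follow the route to nonlocal Harnack inequalities initiated by Di Castro--Kuusi--Palatucci \cite{DKP14}, adjusting it to the inhomogeneous growth carried by $G$. Abbreviate $d:=r^s g^{-1}\big(r^s\,\mathrm{Tail}(u_-;x_0,R)\big)$, so that the asserted inequality reads $\sup_{B_r}u\le C r^{s(1-q/p)q/\epsilon}\max_{\iota\in\{1,q/p,p/q\}}(\inf_{B_r}u+d)^{\iota}+Cd$. Because $G$ has no exact scaling, the equation \eqref{main} does not dilate cleanly, so I will argue directly at the two scales $r<R$ and keep all $r$-dependent factors explicit rather than normalizing $R=1$. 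The argument rests on combining three ingredients, all available from the Caccioppoli-type energy inequalities of the preceding sections: a local boundedness (sup) estimate; a logarithmic estimate controlling the measure of the sub-level sets of $u$; and the expansion of positivity, Lemma \ref{lem2-3}, which converts a density bound for a super-level set of $u$ into a genuine pointwise lower bound for $\inf u$ on a smaller concentric ball, with quantitative loss encoded by the exponent $\epsilon$.

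First I would establish the local boundedness estimate: testing the weak formulation against truncations $(u-k_j)_+$ times suitable cutoffs and iterating over an increasing sequence of levels $k_j$, while using \eqref{0-3}, \eqref{0-4} and \eqref{0-5} to pass back and forth between $G$ and the comparison powers $t^p$ and $t^q$, yields schematically
\[
\sup_{B_r}u\le C\Big(\tfrac1{|B_{2r}|}\int_{B_{2r}}G\big(u_+/r^s\big)\,dx\Big)^{\theta}r^s+C\,d_+,
\]
for an exponent $\theta=\theta(p,q)$ and with $d_+$ the analogue of $d$ built from $u_+$; alternatively one may quote the local boundedness result of \cite{BKO21}. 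The essential point is that the two-sided comparison \eqref{0-5} prevents $\theta$ from being a single universal power, which is exactly what will later produce the maximum over $\iota\in\{1,q/p,p/q\}$. Next, applying the energy inequality to $v:=u+d$ with a test function modelled on $v^{1-p}$ (or its $G$-modulated version) times a cutoff gives a logarithmic, BMO-type estimate for $\log v$ on $B_{2r}$; combined with a De Giorgi iteration this furnishes a level $\lambda$ comparable---up to the structural distortions coming from \eqref{0-4} and \eqref{0-5}---to $\inf_{B_r}v$, together with a concentric sub-ball on which $\{v\ge\lambda\}$ occupies a fixed fraction of the measure.

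The decisive step is then to feed this density information into Lemma \ref{lem2-3} and iterate it over a finite chain of concentric balls (with, if needed, an auxiliary covering), so as to connect the $\sup$ side to the $\inf$ side of the inequality. Each application of the expansion of positivity contributes a multiplicative factor governed by $\epsilon$ and, through \eqref{0-4} and \eqref{0-5}, swaps the roles of the exponents $p$ and $q$; accumulating these factors together with the $r^s$-weights from the three ingredients---which do not cancel precisely because $G$ is not homogeneous---reconstructs exactly the prefactor $r^{s(1-q/p)q/\epsilon}$ and the maximum over $\iota\in\{1,q/p,p/q\}$, while all negative-tail contributions collapse into the single term $Cd$ by the monotonicity of $g$ and $g^{-1}$ and the constraint $r\le R/2$. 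In particular, when $p=q$ these correction factors reduce to $1$ and one recovers the classical nonlocal Harnack inequality of \cite{DKP14}.

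The main obstacle is precisely this bookkeeping of the inhomogeneity. Since the equation lacks exact scaling, every passage through $g$, $g^{-1}$, or the doubling-type inequality \eqref{0-4} distorts the relevant levels by powers that depend on whether the quantities involved exceed $1$ or not, so one must verify with care that after the De Giorgi iteration in the first step and the finitely many expansion-of-positivity steps at the end the constant still depends only on $n,p,q,s,\Lambda,c_0,c_1$, and that the residual powers organize themselves exactly into the exponents $s(1-q/p)q/\epsilon$ and $\iota\in\{1,q/p,p/q\}$. The nonlocal tail terms, being linear in $g$-averages, are comparatively routine, though some attention to the concavity of $g^{-1}$ is needed when collecting them into $Cd$.
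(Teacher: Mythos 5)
Your outline has the right general shape (sup bound plus weak Harnack plus tail control), but two essential mechanisms of the actual argument are missing, and the steps you substitute for them would not close. First, the positive tail. The local boundedness estimate necessarily carries the term $d_+$ built from $\mathrm{Tail}(u_+;x_0,r/2)$, and this is \emph{not} a ``negative-tail contribution'' that collapses into $Cd$ by monotonicity: outside $B_R$ the solution may be large and positive, so $d_+$ has no a priori relation to $d$. The paper needs a separate tail estimate (Lemma \ref{lem3-1}), proved by testing with $(u-2\sup_{B_r}u)\varphi^q$, which gives $r^sg^{-1}(r^s\mathrm{Tail}(u_+;x_0,r))\le C\sup_{B_r}u+Cd$; feeding this back produces a term $C\delta^{1/(q-1)}\sup_{B_\rho}u$ on the right, and this is why the sup estimate of Lemma \ref{lem3-2} must be proved with the free interpolation parameter $\delta$ and why the absorption lemma of Giaquinta--Giusti \cite{GG82} on nested balls $B_{\gamma' r}\subset B_{\gamma r}$ is invoked. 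Your proposal contains neither the $u_+$-tail estimate nor any absorption device, so the sup side cannot be closed in terms of $\inf_{B_r}u$ and $d$ alone.

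Second, the bridge from the $G$-average to the $u^\epsilon$-average is not specified, and this is precisely where \eqref{0-4}, \eqref{0-5} and the exponents in the statement come from. The paper splits $G(u)\le c_0\,G(u^{1-\varepsilon})G(u^\varepsilon)$ by submultiplicativity \eqref{0-4}, pulls out $(\sup u)^{1-\varepsilon}$, uses \eqref{0-5} and \eqref{1-1} to replace $G^{-1}$ and $G(u^\varepsilon)$ by the comparison powers, chooses $\varepsilon=\epsilon/q$ with $\epsilon$ from Lemma \ref{lem2-3}, and then absorbs the factor $(\sup u)^{1-\varepsilon}$ by Young's inequality together with the same iteration lemma; this is what manufactures the prefactor $r^{s(1-\frac qp)\frac q\epsilon}$ and the maximum over $\iota\in\{1,\frac qp,\frac pq\}$. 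In your plan this step is replaced by a logarithmic/BMO estimate plus a chaining of ``expansion of positivity'' applications, with the assertion that the bookkeeping ``reconstructs exactly'' the stated exponents. Apart from being unsubstantiated, this misreads Lemma \ref{lem2-3}: it is not a density-to-pointwise statement awaiting measure-theoretic input (that is Proposition \ref{pro2-1}, already consumed in its proof via the Krylov--Safonov covering lemma), but the finished weak Harnack inequality $\big(\mint_{B_r}u^\epsilon\,dx\big)^{1/\epsilon}\le C\inf_{B_r}u+Cd$. What remains to be done after quoting it is exactly the algebraic reduction of $\mint_{B_r}G(u_+/r^s)\,dx$ to $\mint_{B_r}u^{\epsilon}\,dx$ described above, and your proposal gives no workable substitute for it.
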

	
	\begin{remark}
	Let us point out that the extra hypotheses \eqref{0-4} and \eqref{0-5} are only exploited in the proof of Theorem \ref{thm0} below. The reason why we impose the additional strong conditions on $G$ is that we need split the term $G(u)$ into $G(u^{1-\varepsilon})G(u^\varepsilon)$ with $\varepsilon$ being an arbitrary number in $(0,1)$, and then get the integral of $u^{\varepsilon'}$ $(\varepsilon'\in(0,1))$ as the integrand, which enables us to apply Lemma \ref{lem2-3}. Observe that, if $g(t)=t^{p-1}$, then $q=p$ and
	\begin{align*}
	r^sg^{-1}(r^s\mathrm{Tail}(u_-;x_0,R))&=r^s\left(r^s\int_{\mathbb{R}^n\setminus B_R(x_0)}\frac{u_-^{p-1}(x)}{|x-x_0|^{s(p-1)}}\frac{dx}{|x-x_0|^{n+s}}\right)^\frac{1}{p-1}\\
	&=\left(\frac{r}{R}\right)^\frac{sp}{p-1}\left(R^{sp}\int_{\mathbb{R}^n\setminus B_R(x_0)}\frac{u_-^{p-1}(x)}{|x-x_0|^{n+sp}}\,dx\right)^\frac{1}{p-1}.
	\end{align*}
	Hence, our result is reduced to the Harnack inequality obtained in \cite[Theorem 1.1]{DKP14}.
   \end{remark}

	The paper is organized as follows. In Section \ref{sec-1}, we give the definition of weak solutions to Eq. \eqref{main}, and collect some notations and auxiliary inequalities to be used later. Section \ref{sec-2} is devoted to deducing infimum estimates for weak supersolutions by employing the expansion of positivity.  Finally, we prove the Harnack inequality in Section \ref{sec-3}.

	\section{Preliminaries}
	\label{sec-1}
	
	In this section, we shall give some basic inequalities, state notions of some functional spaces and weak solutions, and then provide a covering lemma.
	
	In what follows, we denote by $C$ a generic positive constant which may change from line to line. Relevant dependencies on parameters will be illustrated utilizing parentheses, i.e., $C\equiv C(n,p,q)$ means that $C$ depends on $n,p,q$. Let $B_r(x_0):=\{x\in\mathbb{R}^n:|x-x_0|<r\}$
	stand for the open ball with center $x_0$ and radius $r>0$. If not important, or clear from the context, we do not denote the center as follows: $B_r:=B_r(x_0)$. 
	If $f\in L^1(A)$ and $A\subset\mathbb{R}^n$ is a measurable subset with positive measure $0<|A|<\infty$, we denote its integral average by
	$$
	(f)_A:=\mint_Af(x)\,dx=\frac{1}{|A|}\int_Af(x)\,dx.
	$$

	The function $G:[0,\infty)\rightarrow[0,\infty)$ is an $N$-function which means that it is convex and increasing, and satisfies that
	$$
	G(0)=0, \quad \lim_{t\rightarrow0+}\frac{G(t)}{t}=0 \quad\text{and} \quad \lim_{t\rightarrow\infty}\frac{G(t)}{t}=\infty.
	$$
	The conjugate function of $N$-function $G$ is denoted by
	$$
	G^*(t)=\sup_{\tau\geq0}\{\tau t-G(\tau)\}.
	$$
   From the relation \eqref{0-3}, we now give several inequalities to be utilized later:
	\begin{itemize}
		\item[($a$)] for $t\in[0,\infty)$,
		\begin{equation}
		\label{1-1}
		\begin{cases}
		a^qG(t)\leq G(at)\leq a^pG(t) & \text{\textmd{if }} a\in(0,1),\\[2mm]
		a^pG(t)\leq G(at)\leq a^qG(t) & \text{\textmd{if }} a\in(1,\infty)
		\end{cases}
		\end{equation}
		and
		\begin{equation}
		\label{1-2}
		\begin{cases}
		a^{p'}G^*(t)\leq G^*(at)\leq a^{q'}G^*(t) & \text{\textmd{if }} a\in(0,1),\\[2mm]
		a^{q'}G^*(t)\leq G^*(at)\leq a^{p'}G^*(t) & \text{\textmd{if }} a\in(1,\infty),
		\end{cases}
		\end{equation}
		where $p',q'$ separately are the H\"{o}lder conjugates of $p,q$.
		
		\item[($b$)] Young's inequality with $\epsilon\in(0,1]$
		\begin{equation}
		\label{1-3}
		t\tau\leq \epsilon^{1-q}G(t)+\epsilon G^*(\tau), \quad t,\tau\geq0.
		\end{equation}
		
		\item[($c$)] for $t,\tau\geq0$,
		\begin{equation}
		\label{1-4}
		G^*(g(t))\leq (q-1)G(t),
		\end{equation}
		and
		\begin{equation}
		\label{1-5}
		2^{-1}(G(t)+G(\tau))\leq G(t+\tau)\leq 2^{q-1}(G(t)+G(\tau)).
		\end{equation}
		
	\end{itemize}
	
	Moreover, the function $G$ fulfills the following $\Delta_2$ and $\nabla_2$ conditions (see \cite[Proposition 2.3]{MR08}):
	\begin{itemize}
		
		\item[($\Delta_2$)] there is a constant $\mu>1$ such that $G(2t)\leq \mu G(t)$ for $t\geq0$;
		
		\smallskip
		
		\item[($\nabla_2$)] there is a constant $\nu>1$ such that $G(t)\leq \frac{1}{2\nu}G(\nu t)$ for $t\geq 0$,
		
	\end{itemize}
	where $\mu,\nu$ depend on $p,q$.
	
	We next introduce the notion of Orlicz-Sobolev spaces. For an $N$-function $G$ with the $\Delta_2$ and $\nabla_2$ conditions, the Orlicz space $L^G(\Omega)$ is defined as
	$$
	L^G(\Omega)=\left\{u \text{ is measurable function in } \Omega: \int_\Omega G(|u(x)|)\,dx<\infty\right\}
	$$
	equipped with the Luxemburg norm
	$$
	\|u\|_{L^G(\Omega)}=\inf\left\{\lambda>0: \int_\Omega G\left(\frac{|u(x)|}{\lambda}\right)\,dx\leq1\right\}.
	$$
	The fractional Orlicz-Sobolev space $W^{s,G}(\Omega)$ ($s\in(0,1)$) is given by
	$$
	W^{s,G}(\Omega)=\left\{u\in L^G(\Omega):\int_\Omega\int_\Omega G\left(\frac{|u(x)-u(y)|}{|x-y|^s}\right)\,\frac{dxdy}{|x-y|^n}<\infty\right\}
	$$
	endowed with the norm
	$$
	\|u\|_{W^{s,G}(\Omega)}=\|u\|_{L^G(\Omega)}+[u]_{s,G,\Omega},
	$$
	where $[u]_{s,G,\Omega}$ is the Gagliardo semi-norm defined as
	$$
	[u]_{s,G,\Omega}=\inf\left\{\lambda>0:\int_\Omega\int_\Omega G\left(\frac{|u(x)-u(y)|}{\lambda|x-y|^s}\right)\,\frac{dxdy}{|x-y|^n}\leq1\right\}.
	$$
	
	Let $C_{\Omega}\equiv (\Omega\times\mathbb{R}^n)\cup(\mathbb{R}^n\times\Omega)$. For  measurable function $u$ in $\mathbb R^n$, we define
	\begin{align*}
	\mathbb{W}^{s,G}(\Omega)=\left\{u\big |_{\Omega}\in L^G(\Omega) : \iint_{C_{\Omega}}G\left(\frac{|u(x)-u(y)|}{|x-y|^s}\right)\,\frac{dxdy}{|x-y|^n}<\infty\right\},
	\end{align*}
	which is the space weak solutions of \eqref{main} belong to.
	
	\medskip
	
	Now we give the definition of weak solutions to \eqref{main}.
	\begin{definition}
		We call $u\in \mathbb{W}^{s,G}(\Omega)$ a weak supersolution of Eq. \eqref{main} if
		\begin{equation}
		\label{1-6}
		\iint_{C_{\Omega}}g\left(\frac{|u(x)-u(y)|}{|x-y|^s}\right)\frac{u(x)-u(y)}{|u(x)-u(y)|}
		(\psi(x)-\psi(y))\frac{K(x,y)}{|x-y|^s}\,dxdy\geq0
		\end{equation}
		for each nonnegative function $\psi\in\mathbb{W}^{s,G}(\Omega)$ with compact support in $\Omega$. For weak subsolution, the above inequality is reversed. $u\in \mathbb{W}^{s,G}(\Omega)$ is a weak solution to \eqref{main} if and only if it is both a weak supersolution and a weak subsolution.
	\end{definition}
	
	We conclude this section with presenting the Krylov-Sofonov covering lemma (see for instance \cite{KS01}) playing an important role in proving Lemma \ref{lem2-3} below.
	
	\begin{lemma}
		\label{lem1}
		Let $\overline{\delta}\in(0,1)$ and $E\subset B_r(x_0)$ be a measurable set. Denote
		$$
		[E]_{\overline{\delta}}=\bigcup_{\rho>0}\{B_{3\rho}(x)\cap B_r(x_0), x\in B_r(x_0): |E\cap B_{3\rho}(x)|>\overline{\delta}|B_\rho(x)|\}.
		$$
		Then one of the following must hold:
		\begin{itemize}
			
			\item[(i)] $|[E]_{\overline{\delta}}|\geq \frac{c(n)}{\overline{\delta}}|E|;$
			
			\item[(ii)] $[E]_{\overline{\delta}}=B_r(x_0).$
			
		\end{itemize}
	\end{lemma}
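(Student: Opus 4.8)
The plan is to follow the Krylov–Safonov measure-propagation scheme adapted to the nonlocal setting. We want to show that one of the two alternatives (i), (ii) in the statement must occur, so I would argue by contradiction: assume $[E]_{\overline\delta}\ne B_r(x_0)$ and prove that then $|[E]_{\overline\delta}|\geq \frac{c(n)}{\overline\delta}|E|$. The geometric heart of the matter is a Vitali-type covering argument. First I would dispose of trivial cases: if $|E|=0$ the claim is vacuous, and (by Lebesgue density) almost every point of $E$ is a density point, so for a.e. $x\in E$ the defining condition $|E\cap B_{3\rho}(x)|>\overline\delta|B_\rho(x)|=\overline\delta\,3^{-n}|B_{3\rho}(x)|$ holds for all sufficiently small $\rho>0$ (since $\overline\delta\,3^{-n}<1$). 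Hence $E$ is contained, up to a null set, in $[E]_{\overline\delta}$, and in particular the family of balls $\{B_{3\rho}(x)\cap B_r(x_0)\}$ entering the definition of $[E]_{\overline\delta}$ is nonempty and covers $E$ modulo a null set.

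Next I would extract a controlled subcover. For each density point $x\in E$ let $\rho(x)$ be the supremum of radii $\rho$ with $|E\cap B_{3\rho}(x)|>\overline\delta|B_\rho(x)|$; the assumption that $[E]_{\overline\delta}$ is not all of $B_r(x_0)$ forces $\rho(x)$ to be finite (indeed uniformly bounded, roughly by the distance that keeps $B_{3\rho}(x)$ from swallowing $B_r(x_0)$), because as $\rho\to\infty$ one would have $|E\cap B_{3\rho}(x)\cap B_r| = |E|$ while $|B_\rho(x)\cap B_r|$ stays bounded, and for $\rho$ past a threshold the inequality holds and the resulting set is all of $B_r(x_0)$. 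Apply the Vitali $5r$-covering lemma (or its $3r$ refinement) to the family $\{B_{3\rho(x)}(x):x\in E\}$: we get a countable disjoint subfamily $\{B_{3\rho_i}(x_i)\}_i$ such that the enlarged balls $\{B_{15\rho_i}(x_i)\}$ (or $B_{9\rho_i}$, depending on which covering lemma) cover $E$ modulo null sets. Then
\begin{align*}
|E|&\leq \sum_i |E\cap B_{15\rho_i}(x_i)|.
\end{align*}
The technical point is to bound $|E\cap B_{15\rho_i}(x_i)|$ in terms of $|E\cap B_{3\rho_i}(x_i)|$: by maximality of $\rho(x_i)$, for any $\rho>\rho(x_i)$ we have $|E\cap B_{3\rho}(x_i)|\leq\overline\delta|B_\rho(x_i)|$, and taking $\rho=5\rho_i$ gives $|E\cap B_{15\rho_i}(x_i)|\leq \overline\delta\,|B_{5\rho_i}(x_i)| = \overline\delta\,5^n|B_{\rho_i}(x_i)| = \overline\delta\,(5/3)^n|B_{3\rho_i}(x_i)|$. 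Since each $B_{3\rho_i}(x_i)$ is, by construction, one of the balls whose intersection with $B_r$ lies in $[E]_{\overline\delta}$, and these are pairwise disjoint, summing yields $|E|\leq \overline\delta\,(5/3)^n\sum_i|B_{3\rho_i}(x_i)\cap B_r(x_0)|\cdot(\text{comparability factor})\leq \frac{C(n)}{\cdot}$; rearranged, $\sum_i|B_{3\rho_i}(x_i)\cap B_r| \geq \frac{c(n)}{\overline\delta}|E|$, and since the left side is at most $|[E]_{\overline\delta}|$ we are done.

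The main obstacle — and the place where one must be careful — is handling the truncation by $B_r(x_0)$ and the comparability between $|B_{3\rho}(x)\cap B_r(x_0)|$ and $|B_\rho(x)|$ when $x$ is near $\partial B_r(x_0)$ and $\rho$ is comparable to $r$: the balls in the definition are intersected with $B_r(x_0)$, so volume comparisons $|B_{3\rho}(x)\cap B_r|\asymp |B_\rho(x)\cap B_r|\asymp \rho^n$ are only valid with dimensional constants and break down precisely in the regime where the "second alternative" $[E]_{\overline\delta}=B_r(x_0)$ kicks in; this is exactly why the dichotomy is stated the way it is. I would isolate this by noting that if no finite $\rho(x)$ exists for some $x$ (equivalently the relevant supremum is attained at a scale forcing $B_{3\rho}(x)\cap B_r = B_r$), then alternative (ii) holds outright, and otherwise the Vitali argument above runs with purely dimensional constants. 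The Orlicz/PDE structure plays no role here — this is a purely measure-theoretic lemma — so once the covering bookkeeping is done correctly the proof is complete.
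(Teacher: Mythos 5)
The paper itself does not prove this lemma; it is quoted from Kinnunen--Shanmugalingam \cite{KS01}, and the standard proof there is exactly the Vitali/Krylov--Safonov covering argument you outline (density points, maximal admissible radii, disjoint subfamily, enlargement bound). So your overall route is the right one, and the skeleton (Lebesgue density to get $E\subset[E]_{\overline\delta}$ up to a null set, the $5r$-covering lemma, the bound $|E\cap B_{15\rho_i}(x_i)|\le\overline\delta\,5^n|B_{\rho_i}(x_i)|$ from the definition of the supremal radius, then summation over the disjoint balls) is sound.

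Two points in your write-up are wrong or incomplete as stated and need repair. First, your discussion of the finiteness of $\rho(x)$ misreads the definition: the right-hand side of the defining inequality is $\overline\delta|B_\rho(x)|$, the measure of the \emph{full} ball, not of $B_\rho(x)\cap B_r(x_0)$. Hence for $\rho\to\infty$ the inequality \emph{fails} (the left side is at most $|E|$ while $\overline\delta|B_\rho(x)|\to\infty$), so $\rho(x)$ is automatically finite, indeed $\rho(x)\le r\,\overline\delta^{-1/n}$; finiteness is not where alternative (ii) enters. The correct place where you must invoke ``not (ii)'' is to exclude admissible pairs $(x,\rho)$ with $B_{3\rho}(x)\supset B_r(x_0)$: any such pair puts all of $B_r(x_0)$ into $[E]_{\overline\delta}$ and gives (ii) outright, so under ``not (ii)'' every admissible radius satisfies $3\rho<|x-x_0|+r<2r$. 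Second, the ``comparability factor'' you leave implicit is precisely the estimate $|B_{3\rho_i}(x_i)\cap B_r(x_0)|\ge c(n)|B_{\rho_i}(x_i)|$, and it is exactly here that the bound $3\rho_i<2r$ (hence ``not (ii)'') is used: for $x\in B_r(x_0)$ and $3\rho\le 2r$ the intersection $B_{3\rho}(x)\cap B_r(x_0)$ contains a ball of radius $3\rho/4$ centered on the segment from $x$ toward $x_0$, which gives the dimensional constant; without the restriction $3\rho\lesssim r$ this comparability genuinely fails. Finally, a minor point: the supremal radius $\rho(x_i)$ need not itself be admissible (the strict inequality may be lost in the limit), so the ball $B_{3\rho(x_i)}(x_i)\cap B_r(x_0)$ is not automatically one of the sets forming $[E]_{\overline\delta}$; run the Vitali argument with admissible radii $\tilde\rho_i>\tfrac12\rho(x_i)$ instead, which keeps both the membership in $[E]_{\overline\delta}$ and the bound $|E\cap B_{15\tilde\rho_i}(x_i)|\le\overline\delta|B_{5\tilde\rho_i}(x_i)|$ since $5\tilde\rho_i>\rho(x_i)$. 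With these repairs your argument closes and yields (i) with $c(n)$ depending only on the dimension.
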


	\section{Expansion of positivity}
	\label{sec-2}
	
	This section is devoted to deriving the infimum estimates on the weak supersolutions of \eqref{main} by expansion of positivity. The following proposition exhibits the spread of pointwise positivity in space.
	
	\begin{proposition}
		\label{pro2-1}
		Let $k\geq0$ and $u\in \mathbb{W}^{s,G}(\Omega)$ be a weak supersolution to Eq. \eqref{main} such that $u\geq0$ in $B_R(x_0)\subset\Omega$. If
		$$
		|B_r\cap\{u\geq k\}|\geq \sigma|B_r|
		$$
		for some $\sigma\in(0,1]$ and $r$ fulfilling $0<r<\frac{R}{16}\leq1$, then there is $\delta\in(0,\frac{1}{2})$, which depends on $n,p,q,s,\Lambda,\sigma$, such that
		$$
		u(x)\geq \frac{1}{2}\delta k-r^sg^{-1}(r^s\mathrm{Tail}(u_-;x_0,R)) \quad \text{in } B_{4r}.
		$$
	\end{proposition}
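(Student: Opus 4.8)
The plan is to follow the nonlocal expansion of positivity of Di Castro--Kuusi--Palatucci \cite{DKP14}, adapted to the general growth: first a logarithmic energy estimate turns the density hypothesis into the \emph{smallness} of a sublevel set of $u$ on a slightly larger ball, and then a De Giorgi iteration promotes this to the pointwise lower bound. Set $T:=r^{s}g^{-1}\bigl(r^{s}\,\mathrm{Tail}(u_{-};x_{0},R)\bigr)$, fix a parameter $d>0$ to be chosen as a suitable fixed multiple of $T$ at the very end (when $T=0$ one lets $d\downarrow 0$ in the final inequality), and put $w:=u+d$, so that $w\geq d>0$ in $B_{R}$. Since $r<R/16$ we have $B_{16r}\subset B_{R}$, and all balls used below are contained in $B_{R}$; moreover, the density hypothesis, passed from $B_{r}$ to $B_{8r}$, reads $|B_{8r}\cap\{u\geq k\}|\geq\sigma\,8^{-n}|B_{8r}|=:\sigma'|B_{8r}|$ with $\sigma'=\sigma'(n,\sigma)$.

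\textbf{Step 1: a logarithmic estimate.} This is the main obstacle. I would take a Lipschitz cut-off $\eta$ with $\eta\equiv 1$ on $B_{8r}$, $\mathrm{supp}\,\eta\subset B_{9r}$, $|\nabla\eta|\leq C/r$, and test the supersolution inequality \eqref{1-6} with $\psi=\eta^{q}\Phi(w)$, where $\Phi$ is a decreasing, power-type function adapted to $G$ and to the size of $d$ (e.g. $\Phi(t)=\int_{t}^{\infty}\frac{d\tau}{\tau g(\tau)}$), chosen so that $\psi$ is nonnegative, bounded, compactly supported in $\Omega$ and lies in $\mathbb{W}^{s,G}(\Omega)$. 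Splitting the resulting double integral into the local region $B_{10r}\times B_{10r}$ and its complement, the local part is handled via the pointwise inequality, valid whenever $w(x)\geq w(y)>0$,
$$
g\!\left(\frac{w(x)-w(y)}{|x-y|^{s}}\right)\bigl(\Phi(w(y))-\Phi(w(x))\bigr)\frac{1}{|x-y|^{s}}\ \geq\ c\,G\!\left(\frac{|\log w(x)-\log w(y)|}{|x-y|^{s}}\right),
$$
which I would derive using only the monotonicity of $g$ and the structural bounds \eqref{0-3}, \eqref{1-1} — crucially \emph{without} \eqref{0-4} or \eqref{0-5}, which are reserved for Theorem \ref{thm0}. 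Once the cut-off is reinstated and the resulting error absorbed via Young's inequality \eqref{1-3} together with \eqref{1-4}, this bounds the local part below by $c$ times the $W^{s,G}$-energy of $\log w$ on $B_{8r}$, minus dimensional contributions comparable to $\Theta_{r}$, where $\Theta_{r}$ is the natural scaling factor of this energy on $B_{8r}$ (equal to $r^{n-sp}$ when $G(t)=t^{p}$). In the complement one splits once more: the contribution with the other variable still inside $B_{R}$, where $u\geq 0$, is again purely dimensional, whereas the genuine tail contribution, with the other variable outside $B_{R}$, is estimated — using $0\leq w\leq C$ on $\mathrm{supp}\,\eta$, $\Phi(w)\leq\Phi(d)\leq C/g(d)$ there, and $|u(x)-u(y)|\leq u(x)+u_{-}(y)$ — by $C\,g(d)^{-1}|B_{r}|\,\mathrm{Tail}(u_{-};x_{0},R)$, which the choice of $d$ comparable to $T$ balances against the dimensional terms. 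Collecting everything and using that $v:=\min\{(\log\tfrac{k+d}{w})_{+},\lambda\}$ is a $1$-Lipschitz image of $\log w$ (so $|v(x)-v(y)|\leq|\log w(x)-\log w(y)|$, and the bound transfers to $v$ by monotonicity of $G$), one obtains, uniformly in $\lambda>0$,
$$
\iint_{B_{8r}\times B_{8r}}G\!\left(\frac{|v(x)-v(y)|}{|x-y|^{s}}\right)\frac{dx\,dy}{|x-y|^{n}}\ \leq\ C(n,p,q,s,\Lambda)\,\Theta_{r}.
$$

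\textbf{Step 2: from energy to measure.} On $B_{8r}\cap\{u\geq k\}$, a set of density at least $\sigma'$, the function $v$ vanishes. A fractional Poincar\'e inequality in the Orlicz setting (available from the $\Delta_{2}$--$\nabla_{2}$ properties, equivalently from the embedding implied by \eqref{1-1}), combined with this vanishing on a large set and the bound of Step 1, yields $|B_{8r}|^{-1}\int_{B_{8r}}v^{p_{0}}\,dx\leq C(n,p,q,s,\Lambda,\sigma)$ for some exponent $p_{0}=p_{0}(p,q)\geq 1$, uniformly in $\lambda$. Chebyshev's inequality then gives
$$
\bigl|\{u\leq(k+d)e^{-\lambda}-d\}\cap B_{8r}\bigr|=\bigl|\{v=\lambda\}\cap B_{8r}\bigr|\leq\frac{C(n,p,q,s,\Lambda,\sigma)}{\lambda^{p_{0}}}\,|B_{8r}|.
$$
Choosing $\lambda=\lambda(n,p,q,s,\Lambda,\sigma)>\log 2$ so large that the prefactor does not exceed the De Giorgi threshold $\nu_{0}=\nu_{0}(n,p,q,s,\Lambda)$ of Step 3, and setting $\delta:=e^{-\lambda}\in(0,\tfrac12)$, the inclusions $\{u\leq\delta k-d\}\subset\{u+d\leq\delta(k+d)\}\subset\{v=\lambda\}$ give $|\{u\leq\delta k-d\}\cap B_{8r}|\leq\nu_{0}|B_{8r}|$.

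\textbf{Step 3: De Giorgi iteration.} Put $\ell:=\delta k-d$; if $\ell\leq 0$ there is nothing to prove, since $u\geq 0$ in $B_{R}\supset B_{4r}$. Otherwise I would run the standard De Giorgi iteration over balls shrinking from $B_{8r}$ to $B_{4r}$ and levels decreasing to $\ell/2$, based on the Caccioppoli inequality for $(\ell_{j}-u)_{+}$ attached to \eqref{main} (valid in the general-growth framework, cf. \cite{BKO21}); since $|\{u\leq\ell\}\cap B_{8r}|\leq\nu_{0}|B_{8r}|$ with $\nu_{0}$ small, the iteration converges and yields
$$
u\ \geq\ \frac{\ell}{2}-C\,r^{s}g^{-1}\bigl(r^{s}\,\mathrm{Tail}(u_{-};x_{0},R)\bigr)\qquad\text{in }B_{4r},
$$
the tail term being intrinsic to nonlocal De Giorgi estimates (and rescaled from radius $4r$ to $r$ via the $\Delta_{2}$-type property of $g^{-1}$), with $C=C(n,p,q,s,\Lambda)$. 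Finally, fixing the multiple of $T$ defining $d$ so as to absorb all the resulting $O(T)$ contributions into the single tail term, this becomes
$$
u\ \geq\ \frac{1}{2}\delta k-r^{s}g^{-1}\bigl(r^{s}\,\mathrm{Tail}(u_{-};x_{0},R)\bigr)\qquad\text{in }B_{4r},
$$
with $\delta\in(0,\tfrac12)$ depending on $n,p,q,s,\Lambda,\sigma$, which is the assertion. As emphasized, the genuinely delicate part is Step 1 — the pointwise algebraic inequality and the cut-off/tail absorption carried out solely under the structural hypothesis \eqref{0-3}.
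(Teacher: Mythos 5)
Your overall architecture is the same as the paper's (logarithmic estimate $\Rightarrow$ smallness in measure of a sublevel set $\Rightarrow$ De Giorgi iteration with the tail absorbed), but the step you yourself identify as the crux fails. The pointwise inequality in Step 1,
$$
g\!\left(\frac{w(x)-w(y)}{|x-y|^{s}}\right)\bigl(\Phi(w(y))-\Phi(w(x))\bigr)\frac{1}{|x-y|^{s}}\ \geq\ c\,G\!\left(\frac{|\log w(x)-\log w(y)|}{|x-y|^{s}}\right),
\qquad \Phi(t)=\int_{t}^{\infty}\frac{d\tau}{\tau g(\tau)},
$$
cannot hold with $c=c(p,q)$ under \eqref{0-3}, \eqref{1-1} alone once $p<q$: it is homogeneous only when $G$ is a pure power. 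Take the admissible model $G(t)\approx\max\{t^{p},t^{q}\}$ (one of the paper's examples), $w(y)=a\gg1$, $w(x)=a+\tfrac14$, $|x-y|^{s}=\tfrac12$. Then $\Phi(w(y))-\Phi(w(x))=\int_{a}^{a+1/4}\frac{d\tau}{\tau g(\tau)}\approx a^{-q}$ and $g\bigl(\tfrac{1/4}{1/2}\bigr)\approx1$, so the left-hand side is $\approx a^{-q}$, while $\log\frac{w(x)}{w(y)}\approx\frac{1}{4a}$ gives a right-hand side $\approx G\bigl(\tfrac{1}{2a}\bigr)\approx a^{-p}$; the ratio behaves like $a^{p-q}\to0$, so no structural constant works. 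Consequently the claimed $G$-Gagliardo bound on $\log w$ in Step 1 is unsubstantiated, and Step 2 (Orlicz Poincar\'e giving decay $\lambda^{-p_{0}}$) rests on it. This is precisely the point where the inhomogeneity of $G$ bites, and it is why the paper does \emph{not} aim for a $G$-energy of $\log v$: in Lemma \ref{lem2-2} it tests \eqref{1-6} with $\varphi^{q}\,v/G(v/r^{s})$ (normalized at the fixed scale $r^{s}$, following \cite{BKO21}) and obtains only the $L^{1}$-type estimate $\int_{B_{6r}}\int_{B_{6r}}\bigl|\log\tfrac{v(x)}{v(y)}\bigr|\,\tfrac{dxdy}{|x-y|^{n}}\leq Cr^{n}$, which, via an $L^{1}$ Poincar\'e and Chebyshev, already yields the measure decay $\tfrac{C}{\sigma\log\frac{1}{2\delta}}$ — weaker than your $\lambda^{-p_{0}}$, but entirely sufficient to fix $\delta$. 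If you want to salvage your route, you must either prove a corrected (weaker) substitute for the displayed inequality or switch to the paper's normalized test function.

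A secondary, fixable issue is the end of Step 3: from a De Giorgi conclusion of the form $u\geq\ell/2-C\,T$ you cannot reach the stated bound with coefficient exactly $1$ on the tail merely by ``fixing the multiple of $T$ defining $d$'', since for $\tfrac12\delta k>T$ the quantity $\tfrac12\delta k-CT$ may be far below $\tfrac12\delta k-T$. The paper's bookkeeping is different: it reduces at the outset to the case $\tfrac12\delta k>T$ (otherwise the claim is trivial because $u\geq0$ in $B_{R}$), and then, along the iteration with levels $l_{j}\in(\tfrac12\delta k,\delta k]$, uses $l_{j}\geq\tfrac12\delta k\geq T$ to absorb the tail \emph{multiplicatively}, via $\mathrm{Tail}(u_{-};x_{0},R)\leq r^{-s}g(l_{j}/r^{s})$, into the main term $G(l_{j}/r^{s})$ of the Caccioppoli estimate; the outcome is the clean bound $u\geq\tfrac12\delta k$ in $B_{4r}$ in that case, with no additive tail error to juggle. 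You should adopt this reduction rather than the additive absorption you sketch.
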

	
	Before proving this proposition, we first need the propagation of positivity in measure, that is the forthcoming lemma.
	
	\begin{lemma}
		\label{lem2-2}
		Let $k\geq0$ and $u\in \mathbb{W}^{s,G}(\Omega)$ be a weak supersolution to Eq. \eqref{main} such that $u\geq0$ in $B_R(x_0)\subset\Omega$. If there is a $\sigma\in(0,1]$ satisfying
		$$
		|B_r\cap\{u\geq k\}|\geq \sigma|B_r|
		$$
		with $0<r<\frac{R}{16}\leq1$, then we infer that, for any $\delta\in(0,\frac{1}{2})$,
		$$
		|B_{6r}\cap\{u\leq 2\delta k-r^sg^{-1}(r^s\mathrm{Tail}(u_-;x_0,R))\}|\leq \frac{C}{\sigma \log\frac{1}{2\delta}}|B_{6r}|
		$$
		with the constant $C>0$ depending only on $n,p,q,s,\Lambda$.
	\end{lemma}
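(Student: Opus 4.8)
The plan is to adapt the logarithmic lemma of Di Castro--Kuusi--Palatucci \cite{DKP14} to the inhomogeneous growth of $g$, using the Orlicz inequalities \eqref{1-1}--\eqref{1-5}. I would set $d:=r^sg^{-1}(r^s\mathrm{Tail}(u_-;x_0,R))$ and $\bar u:=u+d$; since $u\geq0$ on $B_R\supset B_{6r}$, one may assume $2\delta k\geq d$, for otherwise $B_{6r}\cap\{u\leq 2\delta k-d\}=\emptyset$. The auxiliary function is
$$
w:=\Bigl(\log\tfrac{k+d}{\bar u}\Bigr)_+ ,
$$
which is bounded on $B_{6r}$ (there $\bar u\geq d>0$), vanishes on $A:=B_r\cap\{u\geq k\}$ (there $\bar u\geq k+d$), and satisfies $w\geq\log\tfrac{1}{2\delta}$ on $S:=B_{6r}\cap\{u\leq 2\delta k-d\}$ (there $\bar u\leq 2\delta k\leq 2\delta(k+d)$); moreover $|A|\geq\sigma 6^{-n}|B_{6r}|$ by hypothesis. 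After bounding the Gagliardo energy of $w$ on $B_{6r}$, I would pass through a fractional Poincar\'e inequality to an averaged oscillation estimate and then compare the measures of $A$ and $S$.

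For the energy bound --- the technical core --- I would fix $\eta\in C_c^\infty(B_{7r})$ with $0\le\eta\le1$, $\eta\equiv1$ on $B_{6r}$, $|\nabla\eta|\le C/r$ (note $7r<R/2$), and test \eqref{1-6} with $\psi=\eta^q\Phi(\bar u)$ for a suitable decreasing $\Phi$ (reducing to $\Phi(t)=t^{1-p}$ when $g(t)=t^{p-1}$; for general $g$ one takes an $r$-dependent variant such as $\Phi(t)=1/g(t/r^s)$). Using the (anti)symmetry of the integrand, the supersolution inequality rearranges into the scheme ``local term over $B_{7r}\times B_{7r}$ $\le$ cut-off error $+$ nonlocal tail error''. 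In the local term one retains, for $\bar u(x)\ge\bar u(y)$, the sign-definite part $g\!\bigl(\tfrac{\bar u(x)-\bar u(y)}{|x-y|^s}\bigr)\bigl(\Phi(\bar u(y))-\Phi(\bar u(x))\bigr)\eta^q(x)\tfrac{K(x,y)}{|x-y|^s}$, which by the $g$-analogue of the elementary inequality $(a-b)^{p-1}(b^{1-p}-a^{1-p})\ge c(p)\,|\log(a/b)|^p$ --- derived from \eqref{0-3} and \eqref{1-1} --- and by \eqref{0-2} is bounded below by $c\,|x-y|^{-n-sm}\bigl|\log\tfrac{\bar u(x)}{\bar u(y)}\bigr|^m$ for an appropriate $m\in\{p,q\}$; since $|w(x)-w(y)|\le\bigl|\log\tfrac{\bar u(x)}{\bar u(y)}\bigr|$, integration produces $c\iint_{B_{6r}\times B_{6r}}|w(x)-w(y)|^m|x-y|^{-n-sm}\,dxdy$ on the left. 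The cut-off error is controlled via $|\eta^q(x)-\eta^q(y)|\le Cq\,|x-y|/r$ and Young's inequality \eqref{1-3} with a small parameter (so the spurious $G$-term of the difference quotient is partly reabsorbed on the left-hand side), giving a bound $\le Cr^{n-sm}$. In the tail error only the piece with $\bar u(x)\ge\bar u(y)$ has the bad sign; there $|u(x)-u(y)|\le u(x)+u_-(y)$, and splitting $g$ by its doubling bounds leaves a harmless self-term of order $r^{n-sm}$ together with $C\,\mathrm{Tail}(u_-;x_0,R)\int_{B_{7r}}\Phi(\bar u)\,dx$, the intermediate annulus $B_R\setminus B_{7r}$ dropping out since $u_-=0$ there. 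The choice of $d$ is made precisely so that $\Phi(\bar u)\le\Phi(d)$ and $\mathrm{Tail}(u_-;x_0,R)\,\Phi(d)\,|B_{7r}|\le Cr^{n-sm}$ with a constant independent of $k$ and of the tail; this interlocking of $\Phi$ and the calibration of $d$ is the delicate point, and the degenerate case $\mathrm{Tail}(u_-)=0$ is handled by replacing $d$ by $d+\varepsilon$ and letting $\varepsilon\to0$. All in all one arrives at
$$
\iint_{B_{6r}\times B_{6r}}\frac{|w(x)-w(y)|^m}{|x-y|^{n+sm}}\,dxdy\le C\,r^{n-sm},\qquad C=C(n,p,q,s,\Lambda).
$$

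The remaining step is elementary. The fractional Poincar\'e inequality on $B_{6r}$ together with the last display gives $\mint_{B_{6r}}|w-(w)_{B_{6r}}|\,dx\le C_1$ with $C_1=C_1(n,p,q,s,\Lambda)$. Since $w\equiv0$ on $A$,
$$
(w)_{B_{6r}}=\frac{1}{|A|}\int_{A}\bigl|w-(w)_{B_{6r}}\bigr|\,dx\le\frac{|B_{6r}|}{|A|}\,\mint_{B_{6r}}\bigl|w-(w)_{B_{6r}}\bigr|\,dx\le\frac{6^nC_1}{\sigma}=:\frac{C_2}{\sigma}.
$$
If $\log\tfrac{1}{2\delta}\le\tfrac{2C_2}{\sigma}$ the assertion is trivial (its right-hand side then exceeds $|B_{6r}|$); otherwise $w-(w)_{B_{6r}}\ge\tfrac12\log\tfrac{1}{2\delta}$ on $S$, hence $\tfrac12\log\tfrac{1}{2\delta}\,|S|\le\int_{B_{6r}}|w-(w)_{B_{6r}}|\,dx\le C_1|B_{6r}|$, so that $|S|\le\tfrac{2C_1}{\sigma\log(1/2\delta)}|B_{6r}|$ (using $\sigma\le1$), which is what we want. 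The main obstacle throughout is the first displayed energy estimate: establishing the purely algebraic logarithm inequality for a general $g$ obeying only \eqref{0-3}, and --- above all --- arranging the nonlocal tail so that it is absorbed once $d$ is chosen as stated.
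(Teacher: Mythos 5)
Your outer skeleton coincides with the paper's: the same $d$, a test function $\eta^q\Phi(\bar u)$ with $\Phi(t)=1/g(t/r^s)$ (comparable, by \eqref{0-3}, to the paper's choice $\varphi^q\,v/G(v/r^s)$), absorption of the nonlocal term through the calibration of $d$, and the final Poincar\'e-plus-measure comparison, which is correct and is exactly the computation of \cite{DKP14} that the paper also invokes. The genuine gap is the step you yourself single out as the technical core. For $p<q$ there is no ``$g$-analogue'' of $(a-b)^{p-1}(b^{1-p}-a^{1-p})\ge c(p)|\log(a/b)|^p$ yielding a pointwise lower bound $c\,|x-y|^{-n-sm}\bigl|\log\frac{\bar u(x)}{\bar u(y)}\bigr|^m$ with one fixed $m\in\{p,q\}$ and constants depending only on $n,p,q,s,\Lambda$: since $g$ is inhomogeneous, the difference quotient $\frac{\bar u(x)-\bar u(y)}{|x-y|^s}$ and the values $\bar u/r^s$ may lie in different growth regimes. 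Concretely, take $G(t)=\max\{t^p,t^q\}$, $\bar u(y)=Lr^s$, $\bar u(x)=(1+\epsilon)Lr^s$ with $L=12^s/(2\epsilon)\gg1$ and $|x-y|\approx 12r$: then $g\bigl(\frac{\bar u(x)-\bar u(y)}{|x-y|^s}\bigr)\approx c(p)$ is of unit size while $\Phi(\bar u(y))-\Phi(\bar u(x))\approx c\,\epsilon L^{1-q}\approx c\,\epsilon^{q}$, so your sign-definite term is of order $\epsilon^{q}$, whereas $|\log(\bar u(x)/\bar u(y))|^{m}\approx\epsilon^{m}$; for $m=p<q$ the claimed bound fails as $\epsilon\to0$, and for $m=q$ the $r$-powers do not match either. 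Hence the displayed energy estimate $\iint_{B_{6r}\times B_{6r}}|w(x)-w(y)|^m|x-y|^{-n-sm}\,dxdy\le Cr^{n-sm}$ is not a consequence of \eqref{0-3} and \eqref{1-1} as asserted; and if you let $m$ depend on the point you lose the single-exponent fractional Poincar\'e inequality on which your conclusion rests.

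The paper sidesteps exactly this difficulty: testing with $\varphi^q v/G(v/r^s)$ it derives (quoting Steps 1--3 of \cite[Proposition 3.4]{BKO21}) only the exponent-one estimate \eqref{2-2}, namely $\iint_{B_{6r}\times B_{6r}}\bigl|\log\frac{v(x)}{v(y)}\bigr|\,\frac{dxdy}{|x-y|^n}\le Cr^n$, which is robust under \eqref{0-3} precisely because no power of the logarithm is extracted, and the nonlocal error is bounded by $Cr^n$ (not $Cr^{n-sm}$) thanks to $g(d/r^s)=r^s\mathrm{Tail}(u_-;x_0,R)$. It then applies the elementary $L^1$ inequality $\int_{B_{6r}}|w-(w)_{B_{6r}}|\,dx\le C(n)\iint_{B_{6r}\times B_{6r}}\frac{|w(x)-w(y)|}{|x-y|^n}\,dxdy$ to the truncated logarithm $w=\bigl[\min\{\log\frac{1}{2\delta},\log\frac{k+d}{v}\}\bigr]_+$ and concludes as you do. To repair your argument, replace your power-type logarithmic estimate by this exponent-one (or Orlicz-type) version; your tail bookkeeping and the final measure argument can then be kept essentially unchanged.
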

	
	\begin{proof}
		Let $v(x):=u(x)+d$ with $d=r^sg^{-1}(r^s\mathrm{Tail}(u_-;x_0,R))$. Now take a cut-off function $\varphi\in C^\infty_0(B_{7r})$ such that
		$$
		0\leq \varphi\leq1,\quad \varphi\equiv1 \text{ in } B_{6r} \quad \text{and} \quad |\nabla\varphi|\leq\frac{c}{r}.
		$$
		We select $\eta:=\varphi^q\frac{v}{G(v/r^s)}$ as the test function in the weak formulation \eqref{1-6}, and then slightly modify the expression to have
		\begin{align}
		\label{2-1}
		0&\leq \int_{B_{8r}}\int_{B_{8r}}g\left(\frac{|v(x)-v(y)|}{|x-y|^s}\right)\frac{v(x)-v(y)}{|v(x)-v(y)|}\left(\frac{v(x)\varphi^q(x)}{G(v(x)/r^s)}
		-\frac{v(y)\varphi^q(y)}{G(v(y)/r^s)}\right)\frac{K(x,y)}{|x-y|^s}\,dxdy \nonumber\\
		&\quad+2\int_{\mathbb{R}^n\setminus B_{8r}}\int_{B_{8r}}g\left(\frac{|v(x)-v(y)|}{|x-y|^s}\right)\frac{v(x)-v(y)}{|v(x)-v(y)|}\frac{v(x)\varphi^q(x)}{G(v(x)/r^s)}
		\frac{K(x,y)}{|x-y|^s}\,dxdy  \nonumber\\
		&=:I_1+2I_2.
		\end{align}
		Following the arguments of Steps 1--3 in \cite[Proposition 3.4]{BKO21}, we get
		\begin{equation}
		\label{2-2}
		I_1\leq-\frac{1}{C}\int_{B_{6r}}\int_{B_{6r}}\left|\log\frac{v(x)}{v(y)}\right|\,\frac{dxdy}{|x-y|^n}+Cr^n.
		\end{equation}
	
		For the integral $I_2$,
		\begin{align*}
		I_2&=\int_{\mathbb{R}^n\setminus B_{8r}\cap\{v(y)<0\}}\int_{B_{8r}}g\left(\frac{|v(x)-v(y)|}{|x-y|^s}\right)\frac{v(x)-v(y)}{|v(x)-v(y)|}\frac{v(x)\varphi^q(x)}{G(v(x)/r^s)}
		\frac{K(x,y)}{|x-y|^s}\,dxdy\\
		&\quad+\int_{\mathbb{R}^n\setminus B_{8r}\cap\{v(y)\geq0\}}\int_{B_{8r}}g\left(\frac{|v(x)-v(y)|}{|x-y|^s}\right)\frac{v(x)-v(y)}{|v(x)-v(y)|}\frac{v(x)\varphi^q(x)}{G(v(x)/r^s)}
		\frac{K(x,y)}{|x-y|^s}\,dxdy\\
		&=:I_{21}+I_{22}.
		\end{align*}
		We first evaluate $I_{21}$. Note that, by \eqref{0-3}, \eqref{1-1} and \eqref{1-5},
		\begin{align*}
		g\left(\frac{v(x)+v_-(y)}{|x-y|^s}\right)&\leq q2^{q-1}\frac{G\left(\frac{v(x)}{|x-y|^s}\right)+G\left(\frac{v_-(y)}{|x-y|^s}\right)}{\frac{v(x)+v_-(y)}{|x-y|^s}}\\
		&\leq
		\frac{q2^{q-1}}{p}\left(g\left(\frac{v(x)}{|x-y|^s}\right)+g\left(\frac{v_-(y)}{|x-y|^s}\right)\right).
		\end{align*}
		Then,
		\begin{align*}
		I_{21}&=\int_{\mathbb{R}^n\setminus B_{8r}\cap\{v(y)<0\}}\int_{B_{8r}}g\left(\frac{|v(x)-v(y)|}{|x-y|^s}\right)\frac{v(x)\varphi^q(x)}{G(v(x)/r^s)}
		\frac{K(x,y)}{|x-y|^s}\,dxdy\\
		&\leq C\int_{\mathbb{R}^n\setminus B_{8r}}\int_{B_{8r}}\left(g\left(\frac{v(x)}{|x-y|^s}\right)+g\left(\frac{v_-(y)}{|x-y|^s}\right)\right)\frac{\varphi^q(x)r^s}{g(v(x)/r^s)}
		\frac{K(x,y)}{|x-y|^s}\,dxdy\\
		&\leq C\int_{\mathbb{R}^n\setminus B_{8r}}\int_{B_{7r}}\left(g\left(\frac{v(x)}{r^s}\right)+g\left(\frac{v_-(y)}{|x-y|^s}\right)\right)\frac{r^s}{g(v(x)/r^s)|x-y|^{n+s}}
		\,dxdy.
		\end{align*}
		When $x\in B_{7r}$ and $y\in \mathbb{R}^n\setminus B_{8r}$,
		$$
		|y-x_0|\leq \left(1+\frac{|x-x_0|}{|y-x|}\right)|y-x|\leq 8|y-x|,
		$$
		we further get
		\begin{align*}
		I_{21}&\leq Cr^s\int_{\mathbb{R}^n\setminus B_{8r}}\int_{B_{7r}}\,\frac{dxdy}{|y-x_0|^{n+s}}+C\frac{r^s}{g(d/r^s)}\int_{\mathbb{R}^n\setminus B_{8r}}\int_{B_{7r}}g\left(\frac{u_-(y)}{|x_0-y|^s}\right)\,\frac{dxdy}{|y-x_0|^{n+s}}\\
		&\leq Cr^n+C\frac{r^{n+s}}{g(d/r^s)}\mathrm{Tail}(u_-;x_0,R),
		\end{align*}
		where we utilized the fact $u(y)\geq0$ in $B_R\supset B_{8r}$. Here the constant $C$ depends on $n,p,q,s,\Lambda$. 
		We next estimate $I_{22}$ as
		\begin{align*}
		I_{22}&\leq\Lambda\int_{\mathbb{R}^n\setminus B_{8r}\cap\{v(y)\geq0\}}\int_{B_{8r}\cap\{v(x)>v(y)\}}g\left(\frac{|v(x)-v(y)|}{|x-y|^s}\right)\frac{v(x)\varphi^q(x)}{G(v(x)/r^s)}
		\frac{1}{|x-y|^{n+s}}\,dxdy\\
		&\leq \Lambda\int_{\mathbb{R}^n\setminus B_{8r}}\int_{B_{8r}}g\left(\frac{v(x)}{|x-y|^s}\right)\frac{r^s\varphi^q(x)v(x)/r^s}{G(v(x)/r^s)}
		\frac{1}{|x-y|^{n+s}}\,dxdy\\
		&\leq C\int_{\mathbb{R}^n\setminus B_{8r}}\int_{B_{7r}}\frac{r^s}{|x-y|^{n+s}}\,dxdy\\
		&\leq Cr^n.
		\end{align*}
		Recalling the definition of $d$, we arrive at
		\begin{equation}
		\label{2-3}
		I_2\leq Cr^n
		\end{equation}
		with $C$ depending on $n,p,q,s,\Lambda$. 
		
		Merging \eqref{2-2}, \eqref{2-3} with \eqref{2-1} yields that
		$$
		\int_{B_{6r}}\int_{B_{6r}}\left|\log\frac{v(x)}{v(y)}\right|\,\frac{dxdy}{|x-y|^n}\leq Cr^n.
		$$
		For all $\delta\in(0,\frac{1}{2})$, set
		$$
		w:=\left[\min\left\{\log\frac{1}{2\delta},\log\frac{k+d}{v}\right\}\right]_+.
		$$
		Owing to $w$ being a truncation of $\log(k+d)-\log v$, there holds that
		$$
		\int_{B_{6r}}\int_{B_{6r}}\frac{|w(x)-w(y)|}{|x-y|^n}\,dxdy\leq\int_{B_{6r}}\int_{B_{6r}}\left|\log\frac{v(x)}{v(y)}\right|\,\frac{dxdy}{|x-y|^n}
		\leq Cr^n.
		$$
		Observe that
		$$
		\int_{B_{6r}}|w(x)-(w)_{B_{6r}}|\,dx\leq C(n)\int_{B_{6r}}\int_{B_{6r}}\frac{|w(x)-w(y)|}{|x-y|^n}\,dxdy.
		$$
		Hence,
		$$
		\int_{B_{6r}}|w(x)-(w)_{B_{6r}}|\,dx\leq Cr^n=C|B_{6r}|.
		$$
		In the same way as the computations in \cite[page 1819]{DKP14}, we finally deduce that
		$$
		|\{u\leq2\delta k-d\}\cap B_{6r}|\leq |\{v\leq2\delta k+2\delta d\}\cap B_{6r}|\leq \frac{C}{\sigma \log\frac{1}{2\delta}}|B_{6r}|.
		$$
		We now have finished the proof.
	\end{proof}
	
	Based on the above lemma, we can conclude the proof of Proposition \ref{pro2-1}.
	
	\medskip
	
	\noindent\textbf{Proof of Proposition \ref{pro2-1}.} We may suppose, with no loss of generality, that
	$$
	\frac{1}{2}\delta k>r^sg^{-1}(r^s\mathrm{Tail}(u_-;x_0,R)).
	$$
	We now choose a cut-off function $\varphi\in C^\infty_0(B_\rho)$ with $4r\leq \rho\leq6r$ and take the test function $\eta=v_-\varphi^q:=(l-u)_+\varphi^q$ for $l\in(\frac{1}{2}\delta k,2\delta k)$ in the weak formulation \eqref{1-6}. Then we have
	\begin{align}
	\label{2-4}
	0&\leq \int_{B_\rho}\int_{B_\rho}g\left(\frac{|u(x)-u(y)|}{|x-y|^s}\right)\frac{u(x)-u(y)}{|u(x)-u(y)|}(v_-(x)\varphi^q(x)
	-v_-(y)\varphi^q(y))\frac{K(x,y)}{|x-y|^s}\,dxdy \nonumber\\
	&\quad+2\int_{\mathbb{R}^n\setminus B_\rho}\int_{B_\rho}g\left(\frac{|u(x)-u(y)|}{|x-y|^s}\right)\frac{u(x)-u(y)}{|u(x)-u(y)|}v_-(x)\varphi^q(x)
	\frac{K(x,y)}{|x-y|^s}\,dxdy  \nonumber\\
	&=:I_1+2I_2.
	\end{align}
	
	We first evaluate $I_2$,
	\begin{align*}
	I_2&\leq \int_{\mathbb{R}^n\setminus B_\rho\cap\{u(y)<0\}}\int_{B_\rho}g\left(\frac{u(x)-u(y)}{|x-y|^s}\right)(l-u(x))_+\varphi^q(x)
	\frac{K(x,y)}{|x-y|^s}\,dxdy\\
	&\quad +\int_{\mathbb{R}^n\setminus B_\rho\cap\{u(y)\geq0\}}\int_{B_\rho}g\left(\frac{|u(x)-u(y)|}{|x-y|^s}\right)\frac{u(x)-u(y)}{|u(x)-u(y)|}(l-u(x))_+\varphi^q(x)
	\frac{K(x,y)}{|x-y|^s}\,dxdy\\
	&\leq 2l\int_{\mathbb{R}^n\setminus B_\rho}\int_{B_\rho}g\left(\frac{l+u_-(y)}{|x-y|^s}\right)\chi_{\{u<l\}}(x)\varphi^q(x)
	\frac{K(x,y)}{|x-y|^s}\,dxdy\\
	&\leq Cl|B_\rho\cap\{u<l\}|\sup_{x\in\mathrm{supp}\,\varphi}\int_{\mathbb{R}^n\setminus B_\rho}g\left(\frac{l+u_-(y)}{|x-y|^s}\right)
	\frac{K(x,y)}{|x-y|^s}\,dy.
	\end{align*}
	We proceed with treating the integral $I_1$. This procedure is similar to the estimate on $I$ in \cite[Proposition 3.1]{BKO21}, but for the sake of readability we give a sketched proof. Assume $u(x)\geq u(y)$. Then
	\begin{align*}
	&\quad g\left(\frac{|u(x)-u(y)|}{|x-y|^s}\right)\frac{u(x)-u(y)}{|u(x)-u(y)|}(v_-(x)\varphi^q(x)
	-v_-(y)\varphi^q(y))\\
	&\leq-g\left(\frac{|v_-(x)-v_-(y)|}{|x-y|^s}\right)\frac{v_-(x)-v_-(y)}{|v_-(x)-v_-(y)|}(v_-(x)\varphi^q(x)
	-v_-(y)\varphi^q(y)),
	\end{align*}
	by distinguishing three cases that $l\geq u(x)\geq u(y)$, $u(x)\geq l>u(y)$ and $u(x)\geq u(y)\geq l$. Exchanging the roles of $x$ and $y$, we in general case also have the previous inequality. We next consider two cases:
	\begin{equation*}
	\begin{cases}
	\textbf{Case 1: }v_-(x)>v_-(y) & \text{\textmd{and}} \quad \varphi(x)\leq\varphi(y),\\[2mm]
	\textbf{Case 2: }v_-(x)>v_-(y) & \text{\textmd{and}} \quad \varphi(x)>\varphi(y).
	\end{cases}
	\end{equation*}
	The case that $v_-(x)\leq v_-(y)$ is symmetric. In Case 1, by \eqref{0-3} and Young's inequality \eqref{1-3},
	\begin{align*}
	&\quad-g\left(\frac{|v_-(x)-v_-(y)|}{|x-y|^s}\right)\frac{v_-(x)-v_-(y)}{|v_-(x)-v_-(y)|}\frac{(v_-(x)\varphi^q(x)
		-v_-(y)\varphi^q(y))}{|x-y|^s}\\
	&=-g\left(\frac{v_-(x)-v_-(y)}{|x-y|^s}\right)\frac{v_-(x)-v_-(y)}{|x-y|^s}\varphi^q(y)+
	g\left(\frac{v_-(x)-v_-(y)}{|x-y|^s}\right)\frac{\varphi^q(y)-\varphi^q(x)}{|x-y|^s}v_-(x)\\
	&\leq-pG\left(\frac{v_-(x)-v_-(y)}{|x-y|^s}\right)\varphi^q(y)+q
	g\left(\frac{v_-(x)-v_-(y)}{|x-y|^s}\right)\varphi^{q-1}(y)\frac{\varphi(y)-\varphi(x)}{|x-y|^s}v_-(x)\\
	&\leq -pG\left(\frac{v_-(x)-v_-(y)}{|x-y|^s}\right)\varphi^q(y)+\epsilon q(q-1)G\left(\frac{v_-(x)-v_-(y)}{|x-y|^s}\right)\varphi^q(y)\\
	&\quad+C(\epsilon)qG\left(\frac{\varphi(y)-\varphi(x)}{|x-y|^s}v_-(x)\right)\\
	&\leq-\frac{p}{2}G\left(\frac{v_-(x)-v_-(y)}{|x-y|^s}\right)\varphi^q(y)+C(p,q)G\left(\frac{\varphi(y)-\varphi(x)}{|x-y|^s}v_-(x)\right),
	\end{align*}
	where we take $\epsilon=\frac{p}{2q(q-1)}$. In the other case,
	\begin{align*}
	&\quad-g\left(\frac{|v_-(x)-v_-(y)|}{|x-y|^s}\right)\frac{v_-(x)-v_-(y)}{|v_-(x)-v_-(y)|}\frac{(v_-(x)\varphi^q(x)
		-v_-(y)\varphi^q(y))}{|x-y|^s}\\
	&\leq-g\left(\frac{v_-(x)-v_-(y)}{|x-y|^s}\right)\frac{v_-(x)-v_-(y)}{|x-y|^s}\varphi^q(x)+
	g\left(\frac{v_-(x)-v_-(y)}{|x-y|^s}\right)\frac{\varphi^q(y)-\varphi^q(x)}{|x-y|^s}v_-(y)\\
	&\leq-pG\left(\frac{v_-(x)-v_-(y)}{|x-y|^s}\right)\varphi^q(x).
	\end{align*}
	
	All in all, we derive
	\begin{align*}
	&\quad-g\left(\frac{|v_-(x)-v_-(y)|}{|x-y|^s}\right)\frac{v_-(x)-v_-(y)}{|v_-(x)-v_-(y)|}\frac{(v_-(x)\varphi^q(x)
		-v_-(y)\varphi^q(y))}{|x-y|^s}\\
	&\leq-C(p)G\left(\frac{|v_-(x)-v_-(y)|}{|x-y|^s}\right)\min\{\varphi^q(x),\varphi^q(y)\}\\
	&\quad+C(p,q)G\left(\frac{|\varphi(y)-\varphi(x)|}{|x-y|^s}\max\{v_-(x),v_-(y)\}\right).
	\end{align*}
	Therefore,
	\begin{align*}
	I_1&\leq-C\int_{B_\rho}\int_{B_\rho}G\left(\frac{|v_-(x)-v_-(y)|}{|x-y|^s}\right)\min\{\varphi^q(x),\varphi^q(y)\}K(x,y)\,dxdy\\
	&\quad+C\int_{B_\rho}\int_{B_\rho}G\left(\frac{|\varphi(x)-\varphi(y)|}{|x-y|^s}\max\{v_-(x),v_-(y)\}\right)K(x,y)\,dxdy.
	\end{align*}
	Combining the estimates on $I_1$ and $I_2$ with \eqref{2-4}, we know that
	\begin{align}
	\label{2-5}
	&\quad \int_{B_\rho}\int_{B_\rho}G\left(\frac{|v_-(x)-v_-(y)|}{|x-y|^s}\right)\min\{\varphi^q(x),\varphi^q(y)\}K(x,y)\,dxdy \nonumber\\
	&\leq C\int_{B_\rho}\int_{B_\rho}G\left(\frac{|\varphi(x)-\varphi(y)|}{|x-y|^s}\max\{v_-(x),v_-(y)\}\right)K(x,y)\,dxdy \nonumber\\
	&\quad+Cl|B_\rho\cap\{u<l\}|\sup_{x\in\mathrm{supp}\,\varphi}\int_{\mathbb{R}^n\setminus B_\rho}g\left(\frac{l+u_-(y)}{|x-y|^s}\right)
	\frac{1}{|x-y|^{n+s}}\,dy.
	\end{align}
	
	Next, we will perform an iteration process. Set
	\begin{align*}
	&l_j=\left(\frac{1}{2}+\frac{1}{2^{j+1}}\right)\delta k, \quad \rho_j=4r+\frac{1}{2^{j-1}}r,\\
	&B_j:=B_{\rho_j}(x_0), \quad \tilde{\rho}_j=\frac{\rho_j+\rho_{j+1}}{2}, \quad v_j=(l_j-u)_+
	\end{align*}
	for $j=0,1,2,\cdots$. We can find that
	$$
	4r\leq\rho_j,\tilde{\rho}_j\leq6r, \quad l_j-l_{j+1}=\frac{1}{2^{j+2}}\delta k\geq \frac{1}{2^{j+2}}l_j
	$$
	and
	\begin{equation}
	\label{2-6}
	v_j\geq(l_j-l_{j+1})\chi_{\{u<l_{j+1}\}}\geq2^{-j-2}l_j\chi_{\{u<l_{j+1}\}}.
	\end{equation}
	Take cut-off functions $\varphi_j\in C^\infty_0(B_{\tilde{\rho}_j}(x_0))$ ($j=0,1,2,\cdots$) such that
	$$
	0\leq \varphi_j\leq1, \quad \varphi_j\equiv1 \text{ in } B_{j+1} \quad\text{and}\quad |\nabla\varphi_j|\leq c\frac{2^j}{r}.
	$$
	With $v_-,\varphi,l,\rho$ replaced by $v_j,\varphi_j,l_j,\rho_j$ respectively, \eqref{2-5} turns into
	\begin{align*}
	&\quad \int_{B_j}\int_{B_j}G\left(\frac{|v_j(x)-v_j(y)|}{|x-y|^s}\right)\min\{\varphi_j^q(x),\varphi_j^q(y)\}K(x,y)\,dxdy\\
	&\leq C\int_{B_j}\int_{B_j}G\left(\frac{|\varphi_j(x)-\varphi_j(y)|}{|x-y|^s}\max\{v_j(x),v_j(y)\}\right)K(x,y)\,dxdy \nonumber\\
	&\quad+Cl_j|B_j\cap\{u<l_j\}|\sup_{x\in\mathrm{supp}\,\varphi_j}\int_{\mathbb{R}^n\setminus B_j}g\left(\frac{l_j+u_-(y)}{|x-y|^s}\right)
	\frac{1}{|x-y|^{n+s}}\,dy\\
	&\leq C2^{qj}\int_{B_j}\int_{B_j}\left(\frac{|x-y|}{r}\right)^{(1-s)p}G\left(\frac{\max\{v_j(x),v_j(y)\}}{r^s}\right)\,\frac{dxdy}{|x-y|^n}\\
	&\quad+C2^{j(n+qs)}l_j|B_j\cap\{u<l_j\}|\int_{\mathbb{R}^n\setminus B_j}g\left(\frac{l_j+u_-(y)}{|x_0-y|^s}\right)
	\frac{1}{|x_0-y|^{n+s}}\,dy\\
	&=:J_1+J_2,
	\end{align*}
	where we have employed \eqref{1-1} and the facts that
	$$
	|\varphi_j(x)-\varphi_j(y)|\leq C\frac{2^j}{r}|x-y|
	$$
	and for $x\in\mathrm{supp}\,\varphi_j\subset B_{\tilde{\rho}_j}$ and $y\in\mathbb{R}^n\setminus B_j$,
	$$
	|y-x_0|\leq\left(1+\frac{\tilde{\rho}_j}{\rho_j-\tilde{\rho}_j}\right)|y-x|\leq 2^{j+4}|y-x|.
	$$
	Observe that, by \eqref{0-3} and \eqref{1-5},
	$$
	g\left(\frac{l_j+u_-(y)}{|x_0-y|^s}\right)\leq q\frac{G\left(\frac{l_j+u_-(y)}{|x_0-y|^s}\right)}{\frac{l_j+u_-(y)}{|x_0-y|^s}}\leq \frac{q2^{q-1}}{p}\left(g\left(\frac{l_j}{|x_0-y|^s}\right)+g\left(\frac{u_-(y)}{|x_0-y|^s}\right)\right).
	$$
	Since $u(y)\geq0$ in $B_R$, for the integral in $J_2$ there holds that
	\begin{align*}
	&\quad\int_{\mathbb{R}^n\setminus B_j}g\left(\frac{l_j+u_-(y)}{|x_0-y|^s}\right)
	\frac{1}{|x_0-y|^{n+s}}\,dy\\
	&\leq C\int_{\mathbb{R}^n\setminus B_j}g\left(\frac{l_j}{\rho_j^s}\right)+g\left(\frac{u_-(y)}{|x_0-y|^s}\right)\,\frac{dy}{|x_0-y|^{n+s}}\\
	&\leq Cr^{-s}g(l_j/r^s)+C\mathrm{Tail}(u_-;x_0,R)\\
	&\leq Cr^{-s}g(l_j/r^s),
	\end{align*}
	where in the last inequality we note that
	$$
	l_j>\frac{1}{2}\delta k\geq r^sg^{-1}(r^s\mathrm{Tail}(u_-;x_0,R)),
	$$
	namely,
	$$
	\mathrm{Tail}(u_-;x_0,R)\leq r^{-s}g(l_j/r^s).
	$$
	As for $J_1$, it follows from \eqref{1-5} that
	\begin{align*}
	J_1&\leq C2^{qj}\int_{B_j}\int_{B_j}\left(\frac{|x-y|}{r}\right)^{(1-s)p}G\left(\frac{v_j(x)}{r^s}\right)\,\frac{dxdy}{|x-y|^n}\\
	&\leq C2^{qj}r^{-(1-s)p}\int_{B_j}G(v_j(x)/r^s)\,dx\int_{B_{2\rho_j}(x)}\,\frac{dy}{|x-y|^{n-(1-s)p}}\\
	&\leq C2^{qj}\int_{B_j}G(v_j(x)/r^s)\,dx\\
	&\leq C2^{qj}G(l_j/r^s)|B_j\cap\{u<l_j\}|.
	\end{align*}
	Putting together these preceding estimates yields that
	\begin{align*}
	&\quad \int_{B_{j+1}}\int_{B_{j+1}}G\left(\frac{|v_j(x)-v_j(y)|}{|x-y|^s}\right)K(x,y)\,dxdy\\
	&\leq C2^{j(n+sq+q)}G(l_j/r^s)|B_j\cap\{u<l_j\}|.
	\end{align*}
	According to Lemma 4.1 in \cite{BKO21}, we obtain
	\begin{align}
	\label{2-7}
	&\quad \left(\mint_{B_{j+1}}G^\theta\left(\frac{|v_j-(v_j)_{B_{j+1}}|}{\rho_{j+1}^s}\right)\,dx\right)^\frac{1}{\theta} \nonumber\\
	&\leq C\mint_{B_{j+1}}\int_{B_{j+1}}G\left(\frac{|v_j(x)-v_j(y)|}{|x-y|^s}\right)\,\frac{dxdy}{|x-y|^n} \nonumber\\
	&\leq C2^{j(n+sq+q)}G(l_j/r^s)\frac{|B_j\cap\{u<l_j\}|}{|B_j|},
	\end{align}
	with $\theta>1$ depending only on $n,s$.
	
	On the other hand, by means of \eqref{1-5} and Jensen's inequality, the following display
	\begin{equation}
	\label{2-8}
	\begin{split}
	\left(\mint_{B_{j+1}}G^\theta\left(\frac{v_j}{\rho_{j+1}^s}\right)\,dx\right)^\frac{1}{\theta}&\leq C\left(\mint_{B_{j+1}}G^\theta\left(\frac{|v_j-(v_j)_{B_{j+1}}|}{\rho_{j+1}^s}\right)\,dx\right)^\frac{1}{\theta}\\
	&\quad+C\mint_{B_{j+1}}G\left(\frac{v_j}{\rho_{j+1}^s}\right)\,dx
	\end{split}
	\end{equation}
	is valid. Moreover, via \eqref{2-6} and $4r\leq \rho_{j+1}\leq6r$,
	\begin{equation}
	\label{2-9}
	G^\theta(v_j/\rho_{j+1}^s)\geq G^\theta(2^{-j-2}l_j/\rho_{j+1}^s)\chi_{\{u<l_{j+1}\}}\geq C2^{-jq\theta}G(l_j/r^s)\chi_{\{u<l_{j+1}\}}.
	\end{equation}
	It then follows from \eqref{2-7}--\eqref{2-9} that
	\begin{align*}
	&\quad C2^{-jq}G(l_j/r^s)\left(\mint_{B_{j+1}}\chi_{\{u<l_{j+1}\}}\,dx\right)^\frac{1}{\theta}\\
	&\leq C2^{j(n+sq+q)}G(l_j/r^s)\frac{|B_j\cap\{u<l_j\}|}{|B_j|}+C\mint_{B_{j+1}}G(l_j/r^s)\chi_{\{u<l_{j}\}}\,dx\\
	&\leq C2^{j(n+sq+q)}G(l_j/r^s)\frac{|B_j\cap\{u<l_j\}|}{|B_j|},
	\end{align*}
	that is,
	$$
	\left(\frac{|B_{j+1}\cap\{u<l_{j+1}\}|}{|B_{j+1}|}\right)^\frac{1}{\theta}\leq C2^{j(n+sq+2q)}\frac{|B_j\cap\{u<l_j\}|}{|B_j|}.
	$$
	
	Denote
	$$
	A_j=\frac{|B_j\cap\{u<l_j\}|}{|B_j|}.
	$$
	Then
	$$
	A_{j+1}\leq C2^{j(n+sq+2q)\theta}A_j^\theta.
	$$
	We can apply the iteration lemma (see, e.g., \cite[Lemma 7.1]{Giu}) to deduce that if
	$$
	A_0\leq C^\frac{-1}{\theta-1}2^{-(n+sq+2q)\frac{\theta}{(\theta-1)^2}}=:\beta,
	$$
	then $A_j\rightarrow0$ as $j\rightarrow\infty$. Now from Lemma \ref{lem2-2} we examine
	\begin{align*}
	A_0&=\frac{|B_{6r}\cap\{u<\delta k\}|}{|B_{6r}|}\\
	&\leq \frac{|B_{6r}\cap\{u\leq 2\delta k-r^sg^{-1}(r^s\mathrm{Tail}(u_-;x_0,R))\}|}{|B_{6r}|}\\
	&\leq \frac{C}{\sigma \log\frac{1}{2\delta}}.
	\end{align*}
	As long as we choose such small $\delta$ that
	$$
	\frac{C}{\sigma \log\frac{1}{2\delta}}\leq \beta \quad\Rightarrow\quad \delta\leq \frac{1}{2}e^{-\frac{C}{\sigma\beta}}<\frac{1}{2},
	$$
	the desired result $\lim_{j\rightarrow\infty}A_j=0$ can be justified. In other words, we draw a conclusion that there exists $\delta$, determined by $n,p,q,s,\Lambda$ and $\sigma$, such that
	$$
	u(x)\geq \frac{1}{2}\delta k
	$$
	in $B_{4r}$. We now complete the proof.
	
	\medskip
	
	At the end of this section, as a consequence of Proposition \ref{pro2-1} and the Krylov-Sofonov covering lemma, we derive the following result.
	
	\begin{lemma}
		\label{lem2-3}
		Suppose that $u\in \mathbb{W}^{s,G}(\Omega)$, satisfying $u\geq0$ in $B_R(x_0)\subset\Omega$, is a weak supersolution to Eq. \eqref{main}. Then we can find two constants $\epsilon\in(0,1)$ and $C\geq1$, both of which depend only upon $n,p,q,s,\Lambda$, such that, when $B_r(x_0)\subset B_R(x_0)$,
		$$
		\left(\mint_{B_r}u^\epsilon\,dx\right)^\frac{1}{\epsilon}\leq C\inf_{B_r}u+Cr^sg^{-1}(r^s\mathrm{Tail}(u_-;x_0,R))
		$$
		is valid.
	\end{lemma}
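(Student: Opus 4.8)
The plan is a De Giorgi--Nash--Moser-type measure argument: feed the expansion of positivity (Proposition~\ref{pro2-1}) into the Krylov--Sofonov covering lemma (Lemma~\ref{lem1}) to obtain a polynomial decay estimate for the superlevel sets of $u$, then integrate it by Cavalieri's principle. First I would normalize. Set $d:=r^sg^{-1}(r^s\mathrm{Tail}(u_-;x_0,R))$, $v:=u+d$, and $m:=\inf_{B_r}u+d=\inf_{B_r}v$; replacing $u$ by $u+\varepsilon$ (still a weak supersolution, with a not larger tail term) and letting $\varepsilon\downarrow0$ at the end, we may assume $m>0$. With $w:=v/m$ one has $w\ge1$ a.e. in $B_r$ and $\inf_{B_r}w=1$, and the assertion reduces to $\mint_{B_r}w^{\epsilon}\,dx\le C$ for suitable data-dependent $\epsilon\in(0,1)$, $C\ge1$, since then $(\mint_{B_r}u^{\epsilon})^{1/\epsilon}\le m\,(\mint_{B_r}w^{\epsilon})^{1/\epsilon}\le C^{1/\epsilon}(\inf_{B_r}u+d)$.

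The second step is to recast Proposition~\ref{pro2-1} as a density-to-pointwise statement for $w$: fixing a small data-dependent parameter $\overline\delta\in(0,c(n))$ and putting $\sigma:=3^{-n}\overline\delta$, there are data constants $c_1\in(0,1)$ and $\lambda_0\ge2$ such that, whenever $B_{12\rho}(y)\subset B_R(x_0)$ and $\lambda\ge\lambda_0$,
\[
|B_{3\rho}(y)\cap\{w\ge\lambda\}|\ge\sigma\,|B_{3\rho}(y)|\ \Longrightarrow\ w\ge c_1\lambda\ \text{ in }B_{12\rho}(y).
\]
Indeed, $\{w\ge\lambda\}=\{u\ge\lambda m-d\}$ with level $k:=\lambda m-d\ge(\lambda-1)m\ge0$; applying Proposition~\ref{pro2-1} on $B_{3\rho}(y)$ gives $u\ge\tfrac12\delta(\sigma)k-(3\rho)^sg^{-1}((3\rho)^s\mathrm{Tail}(u_-;x_0,R))$ in $B_{12\rho}(y)$, and using $m\ge d$ together with the power-type bounds for $g$ and $g^{-1}$ implied by \eqref{0-3} the tail correction is bounded by $Cm$, hence by a small fraction of $\lambda m$ once $\lambda_0$ is large, leaving $w\ge u/m\ge c_1\lambda$.

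Now I would run the covering lemma. Choose $\overline\delta:=c(n)/2$, so that $\tau:=\overline\delta/c(n)=\tfrac12<1$, and take $\lambda_0$ also so large that $c_1\lambda_0>1$. For $\lambda\ge\lambda_0$ apply Lemma~\ref{lem1} to $E:=B_r\cap\{w\ge\lambda\}$. Alternative~(ii) would give $[E]_{\overline\delta}=B_r$; covering $B_r$ by the balls $B_{3\rho}(y)$ entering the definition of $[E]_{\overline\delta}$ (for each of which $|B_{3\rho}(y)\cap\{w\ge\lambda\}|\ge|E\cap B_{3\rho}(y)|>\overline\delta|B_{\rho}(y)|=\sigma|B_{3\rho}(y)|$) and invoking the reformulated Proposition~\ref{pro2-1} would force $w\ge c_1\lambda$ a.e. in $B_r$, i.e. $1=\inf_{B_r}w\ge c_1\lambda_0>1$, a contradiction. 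Hence alternative~(i) holds, $|[E]_{\overline\delta}|\ge\tfrac{c(n)}{\overline\delta}|E|=\tau^{-1}|E|$, while the same application of Proposition~\ref{pro2-1} gives $[E]_{\overline\delta}\subset B_r\cap\{w\ge c_1\lambda\}$; together,
\[
|B_r\cap\{w\ge c_1\lambda\}|\ \ge\ \tau^{-1}\,|B_r\cap\{w\ge\lambda\}|,\qquad\lambda\ge\lambda_0 .
\]
Replacing $\lambda$ by $\lambda/c_1$ and iterating from $\lambda=\lambda_0$ yields $|B_r\cap\{w\ge\lambda_0c_1^{-k}\}|\le\tau^k|B_r|$ for all $k\in\mathbb{N}$, hence the polynomial decay $|B_r\cap\{w\ge\lambda\}|\le C\lambda^{-\mu}|B_r|$ for $\lambda\ge\lambda_0$, with $\mu:=\log(1/\tau)/\log(1/c_1)>0$ and $C$ data-dependent. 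Choosing $\epsilon:=\min\{\mu/2,1/2\}\in(0,1)$ and using $w\ge1$,
\[
\mint_{B_r}w^{\epsilon}\,dx=\epsilon\int_0^{\infty}\lambda^{\epsilon-1}\frac{|B_r\cap\{w>\lambda\}|}{|B_r|}\,d\lambda\le\lambda_0^{\epsilon}+\epsilon C\int_{\lambda_0}^{\infty}\lambda^{\epsilon-1-\mu}\,d\lambda=:\widetilde C,
\]
a data constant, which is exactly what was needed; undoing the normalization and sending $\varepsilon\downarrow0$ finishes the proof with $C=\widetilde C^{1/\epsilon}$.

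The main obstacle is the geometric bookkeeping in passing from Proposition~\ref{pro2-1}, which is stated for balls concentric with $x_0$, to its use on the off-center balls $B_{3\rho}(y)$, $y\in B_r(x_0)$, supplied by Lemma~\ref{lem1}: one must verify that Proposition~\ref{pro2-1} and Lemma~\ref{lem2-2} behind it carry over verbatim to such balls with the \emph{same} quantity $\mathrm{Tail}(u_-;x_0,R)$ --- which is legitimate because $u_-\equiv0$ on $B_R(x_0)$, so the tail only reflects the fixed far region --- and, more delicately, that every dilated ball $B_{12\rho}(y)$ lies inside $B_R(x_0)$. The density condition in Lemma~\ref{lem1} forces $\rho<\overline\delta^{-1/n}r$, so this is guaranteed provided $r$ is a suitably small data-dependent fraction of $R$ (with $\overline\delta=c(n)/2$ only a fixed such fraction is needed), and the general case $B_r(x_0)\subset B_R(x_0)$ then follows after a routine reduction to small scales. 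Apart from this accounting and the elementary power-function manipulations used to absorb the tail correction uniformly in $\rho$, the argument is standard.
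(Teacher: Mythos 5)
Your proposal is correct and follows essentially the same route as the paper: the paper also obtains superlevel-set decay by feeding Proposition \ref{pro2-1} into the covering Lemma \ref{lem1} and then integrates, following \cite[Lemma 4.1]{DKP14} with levels $t(\delta/2)^i$, whereas you normalize by $\inf_{B_r}u+r^sg^{-1}(r^s\mathrm{Tail}(u_-;x_0,R))$ so that alternative (ii) of the covering lemma becomes a contradiction rather than the terminating case --- a cosmetic difference. The off-center application of Proposition \ref{pro2-1} with the tail comparison and the radius bookkeeping that you flag are exactly the points the paper leaves implicit by citing the argument of \cite{DKP14} verbatim, so your treatment is, if anything, more explicit than the paper's.
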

	
	\begin{proof}
		Define for any $t>0$
		$$
		A^i_t=\left\{x\in B_r: u(x)>t\left(\frac{1}{2}\delta\right)^i-\frac{T}{1-\delta/2}\right\},  \quad i=0,1,2,\cdots,
		$$
		where $\delta$ is identical to that of Proposition \ref{pro2-1}, and $T$ stands for
		$$
		T=r^sg^{-1}(r^s\mathrm{Tail}(u_-;x_0,R)).
		$$
		Recalling Lemma \ref{lem1} and Proposition \ref{pro2-1}, we could follow the proof of \cite[Lemma 4.1]{DKP14} verbatim, except substituting $\delta$ in \cite[Lemma 4.1]{DKP14} with $\frac{1}{2}\delta$ here, to arrive at
		$$
		\mint_{B_r}u^\epsilon\,dx\leq C\left(\inf_{B_r}u+\frac{T}{1-\delta/2}\right)^\epsilon.
		$$
		This directly implies the desired result.
	\end{proof}
	
	\section{Nonlocal Harnack inequality}
	\label{sec-3}
	
	In this section, we are going to show the nonlocal Harnack inequality by merging the local boundedness on subsolutions (Lemma \ref{lem3-2}) along with the infimum estimate of supersolution (Lemma \ref{lem2-3}), and taking into account the tail estimate for solutions (Lemma \ref{lem3-1}) in a suitable way.
	
	\begin{lemma}
		\label{lem3-1}
		Assume that $u\in \mathbb{W}^{s,G}(\Omega)\cap L^g_s(\mathbb{R}^n)$ is a weak solution to Eq. \eqref{main} such that $u\geq0$ in $B_R(x_0)\subset\Omega$. Then the tail estimate
		$$
		r^sg^{-1}(r^s\mathrm{Tail}(u_+;x_0,r))\leq C\sup_{B_r}u+Cr^sg^{-1}(r^s\mathrm{Tail}(u_-;x_0,R))
		$$
		holds true for all $0<r<R$, where $C>0$ depends only on $n,p,q,s,\Lambda$.
	\end{lemma}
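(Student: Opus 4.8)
The plan is to test the weak supersolution formulation \eqref{1-6} (recall that $u$, being a weak solution, is in particular a weak supersolution) against a \emph{shifted} cut-off, and then to balance the resulting local energy against the nonlocal contributions — the latter being where the positive tail is generated. One may assume $M:=\sup_{B_r}u<\infty$, since otherwise the right-hand side is infinite; set $T:=r^sg^{-1}\big(r^s\mathrm{Tail}(u_-;x_0,R)\big)$ and $\kappa:=M+T$ (the case $\kappa=0$ being trivial, because then $u\equiv0$ on $B_r$ and, since also $u\ge0$ on all of $\mathbb{R}^n$, a test with a plain cut-off forces $\mathrm{Tail}(u_+;x_0,r)=0$). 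So assume $\kappa>0$. I would pick $\varphi\in C_0^\infty(B_{r/2})$ with $\varphi\equiv1$ on $B_{r/4}$, $0\le\varphi\le1$, $|\nabla\varphi|\le c/r$, and use $\psi:=(2\kappa-u)\varphi^q$ in \eqref{1-6}: on $\mathrm{supp}\,\varphi\subset B_{r/2}\subset B_R$ one has $0\le u\le M\le\kappa$, so $2\kappa-u\ge\kappa$ and $\psi\ge0$; moreover $\psi\in\mathbb{W}^{s,G}(\Omega)$ has compact support in $\Omega$. Decomposing the integral around $B_r$ gives $0\le I_1+2I_2$, with $I_1$ the double integral over $B_r\times B_r$ and $I_2$ the one over $(\mathbb{R}^n\setminus B_r)\times B_r$; since $\mathrm{supp}\,\varphi\subset B_{r/2}$, in $I_2$ one always has $r/2\le|x-y|\le\tfrac32|y-x_0|$, which avoids small-distance issues.

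For $I_1$ I would set $w:=(2\kappa-u)_+$ (equal to $2\kappa-u$ on $B_r$, with $2\kappa\ge u$ there) and run the pointwise computation used for the integral $I_1$ in the proof of Proposition \ref{pro2-1}, with level $l=2\kappa$ and $v_-$ replaced by $w$, to get
\[
I_1\le -c\int_{B_r}\int_{B_r}G\!\left(\frac{|u(x)-u(y)|}{|x-y|^s}\right)\!\min\{\varphi^q(x),\varphi^q(y)\}K\,dxdy+C\int_{B_r}\int_{B_r}G\!\left(\frac{|\varphi(x)-\varphi(y)|}{|x-y|^s}\max\{w(x),w(y)\}\right)\!K\,dxdy.
\]
The first term is nonpositive and is discarded. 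In the second, $|\varphi(x)-\varphi(y)|\le C|x-y|/r$ and $\max\{w(x),w(y)\}\le2\kappa$, so the argument of $G$ is $\le C\big(\tfrac{|x-y|}{r}\big)^{1-s}\tfrac{2\kappa}{r^s}$; pulling the factor $\big(\tfrac{|x-y|}{r}\big)^{1-s}$ (which is $\le1$ for $|x-y|\le r$ and $\le C$ for $|x-y|\le2r$) out of $G$ \emph{with exponent $p$} via \eqref{1-1}, together with the $\Delta_2$ property, leaves $I_1\le C\int_{B_r}\int_{B_r}\big(\tfrac{|x-y|}{r}\big)^{(1-s)p}G(2\kappa/r^s)|x-y|^{-n}\,dxdy\le Cr^nG(\kappa/r^s)$.

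For the nonlocal term I would split $I_2=-A+B$ according to whether $u(y)>u(x)$ or $u(y)\le u(x)$; both $A,B\ge0$, so $A\le\tfrac12 I_1+B$. The tail appears in $A$: restricting $x$ to $B_{r/4}$ (where $\varphi^q(x)=1$, $2\kappa-u(x)\ge\kappa$, $u(x)\le\kappa$) and $y$ to $\{u>2\kappa\}$ (so $u_+(y)=u(y)$ and $u(y)-u(x)\ge u(y)/2$), and using $|x-y|\le\tfrac32|y-x_0|$, $K\ge\Lambda^{-1}|x-y|^{-n}$, the $\Delta_2$ property of $g$, and the elementary identity $\int_{\mathbb{R}^n\setminus B_r}g(c|z-x_0|^{-s})|z-x_0|^{-n-s}\,dz=\tfrac{C(n)}{sc}G(c/r^s)\le Cr^{-s}g(c/r^s)$ (which also bounds the contribution of $\{0\le u\le 2\kappa\}$ to $\mathrm{Tail}(u_+;x_0,r)$ by $Cr^{-s}g(\kappa/r^s)$), I would obtain $A\ge c\kappa r^n\big(\mathrm{Tail}(u_+;x_0,r)-Cr^{-s}g(\kappa/r^s)\big)$. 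For $B$, using $2\kappa-u(x)\le2\kappa$, $(u(x)-u(y))_+\le M+u_-(y)\le\kappa+u_-(y)$, the bounds $r/2\le|x-y|\le\tfrac32|y-x_0|$, sub-additivity and $\Delta_2$ of $g$, and the observation that $u_-\equiv0$ on $B_R$ (hence $\mathrm{Tail}(u_-;x_0,r)=\mathrm{Tail}(u_-;x_0,R)=r^{-s}g(T/r^s)$ with $T\le\kappa$), I would get $B\le C\kappa r^{n-s}g(\kappa/r^s)$. Combining with $I_1\le Cr^nG(\kappa/r^s)\le C\kappa r^{n-s}g(\kappa/r^s)$ (since $G(t)\le tg(t)/p$) yields $\kappa r^n\,\mathrm{Tail}(u_+;x_0,r)\le C\kappa r^{n-s}g(\kappa/r^s)$, i.e. $r^s\mathrm{Tail}(u_+;x_0,r)\le Cg(\kappa/r^s)$; applying $g^{-1}$ and absorbing the constant via $g^{-1}(Cg(t))\le C't$ (a consequence of \eqref{0-3}) gives $r^sg^{-1}(r^s\mathrm{Tail}(u_+;x_0,r))\le C\kappa=C\sup_{B_r}u+Cr^sg^{-1}(r^s\mathrm{Tail}(u_-;x_0,R))$.

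The decisive point is the treatment of $I_1$: a crude estimate of the cut-off energy $\int\!\int G(|\varphi(x)-\varphi(y)|\max\{w\}/|x-y|^s)K$ would produce a power $\kappa^q/r^{sq}$ instead of $G(\kappa/r^s)$ and wreck the homogeneity balance, so it is essential to exploit the convexity of $G$ through \eqref{1-1} by extracting the small factor $(|x-y|/r)^{1-s}$ with the lower exponent $p$ before integrating. A second subtlety is the size of the shift: $2\kappa$ must be large enough that the weight $2\kappa-u$ stays bounded below (so that $A$ controls the \emph{full} tail even when $\sup_{B_r}u$ is essentially attained), yet not so large that the cut-off energy exceeds $r^nG(\kappa/r^s)$; the choice $\kappa=\sup_{B_r}u+r^sg^{-1}(r^s\mathrm{Tail}(u_-;x_0,R))$ is what makes both work simultaneously.
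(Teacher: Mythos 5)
Your proof is correct and follows essentially the same route as the paper: you test with (a sign-flipped version of) the paper's shifted cut-off $(u-2l)\varphi^q$, split into the local part over $B_r\times B_r$ (bounded by $Cr^nG(\mathrm{level}/r^s)$ after extracting the factor $(|x-y|/r)^{(1-s)p}$ via \eqref{1-1}) and the nonlocal part (which yields the positive tail up to errors of size $r^{-s}g(\mathrm{level}/r^s)$ and $\mathrm{Tail}(u_-;x_0,R)$), and finish with the same $g^{-1}$-doubling manipulation as in \eqref{3-3-1}. The only cosmetic differences are your level $\kappa=\sup_{B_r}u+r^sg^{-1}(r^s\mathrm{Tail}(u_-;x_0,R))$, which absorbs the negative tail a bit earlier, and your explicit use of only the supersolution half of the equation, which is also all the paper's computation actually needs.
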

	
	\begin{proof}
		Let $l=\sup_{B_r}u$. We take the test function
		$$
		\eta:=(u-2l)\varphi^q
		$$
		in the weak formulation \eqref{1-6}, where $\varphi\in C^\infty_0(B_r)$ satisfies that
		$$
		0\leq\varphi\leq1, \quad \varphi\equiv1 \text{ in } B_\frac{r}{2}, \quad \varphi\equiv0 \text{ on } \mathbb{R}^n\setminus B_\frac{3r}{4} \quad \text{and} \quad |\nabla \varphi|\leq \frac{c}{r},
		$$
		to derive
		\begin{align}
		\label{3-1}
		0&= \int_{B_r}\int_{B_r}g\left(\frac{|u(x)-u(y)|}{|x-y|^s}\right)\frac{u(x)-u(y)}{|u(x)-u(y)|}(\eta(x)
		-\eta(y))\frac{K(x,y)}{|x-y|^s}\,dxdy \nonumber\\
		&\quad+2\int_{\mathbb{R}^n\setminus B_r}\int_{B_r}g\left(\frac{|u(x)-u(y)|}{|x-y|^s}\right)\frac{u(x)-u(y)}{|u(x)-u(y)|}\eta(x)
		\frac{K(x,y)}{|x-y|^s}\,dxdy  \nonumber\\
		&=:I_1+2I_2.
		\end{align}
		For $I_2$, we can see that
		\begin{align*}
		I_2&\geq \int_{\mathbb{R}^n\setminus B_r\cap\{u(y)\geq l\}}\int_{B_r}g\left(\frac{u(y)-u(x)}{|x-y|^s}\right)(2l-u(x))\varphi^q(x)\frac{K(x,y)}{|x-y|^s}\,dxdy\\
		&\quad-\int_{\mathbb{R}^n\setminus B_r\cap\{u(y)< l\}}\int_{B_r}2lg\left(\frac{|u(y)-u(x)|}{|x-y|^s}\right)\varphi^q(x)\frac{K(x,y)}{|x-y|^s}\,dxdy\\
		&\geq \int_{\mathbb{R}^n\setminus B_r}\int_{B_r}lg\left(\frac{(u(y)-l)_+}{|x-y|^s}\right)\varphi^q(x)\frac{K(x,y)}{|x-y|^s}\,dxdy\\
		&\quad-\int_{\mathbb{R}^n\setminus B_r}\int_{B_r}2lg\left(\frac{|u(y)-u(x)|}{|x-y|^s}\right)\chi_{\{u(y)<l\}}\varphi^q(x)\frac{K(x,y)}{|x-y|^s}\,dxdy\\
		&=:I_{21}-I_{22}.
		\end{align*}
		We know from \eqref{0-3} and \eqref{1-5} that
		$$
		g\left(\frac{u_+(y)}{|x-y|^s}\right)\leq g\left(\frac{(u(y)-l)_++l}{|x-y|^s}\right)\leq C\left(g\left(\frac{(u(y)-l)_+}{|x-y|^s}\right)+g\left(\frac{l}{|x-y|^s}\right)\right).
		$$
		Thereby,
		\begin{align*}
		I_{21}
		&\geq Cl\int_{\mathbb{R}^n\setminus B_r}\int_{B_r}g\left(\frac{u_+(y)}{|x-y|^s}\right)\varphi^q(x)\frac{K(x,y)}{|x-y|^s}\,dxdy\\
		&\quad-l\int_{\mathbb{R}^n\setminus B_r}\int_{B_r}g\left(\frac{l}{|x-y|^s}\right)\varphi^q(x)\frac{K(x,y)}{|x-y|^s}\,dxdy\\
		&\geq Cl\int_{\mathbb{R}^n\setminus B_r}\int_{B_\frac{r}{2}}g\left(\frac{u_+(y)}{|x-y|^s}\right)\frac{1}{|x-y|^{n+s}}\,dxdy\\
		&\quad-Cl\int_{\mathbb{R}^n\setminus B_r}\int_{B_\frac{3r}{4}}g\left(\frac{l}{|x-y|^s}\right)\frac{1}{|x-y|^{n+s}}\,dxdy\\
		&\geq Cl\int_{\mathbb{R}^n\setminus B_r}\int_{B_\frac{r}{2}}g\left(\frac{u_+(y)}{|x_0-y|^s}\right)\frac{1}{|x_0-y|^{n+s}}\,dxdy\\
		&\quad-Cl\int_{\mathbb{R}^n\setminus B_r}\int_{B_\frac{3r}{4}}g\left(\frac{l}{r^s}\right)\frac{1}{|x_0-y|^{n+s}}\,dxdy\\
		&=Cl|B_r|\mathrm{Tail}(u_+;x_0,r)-Clr^{-s}g(l/r^s)|B_r|,
		\end{align*}
		where we have used \eqref{0-3} and the facts that, for $x\in B_\frac{r}{2}$ and $y\in\mathbb{R}^n\setminus B_r$
		$$
		|x-y|\leq \left(1+\frac{|x-x_0|}{|y-x_0|}\right)|y-x_0|\leq 2|y-x_0|,
		$$
		and for $x\in B_\frac{3r}{4}$ and $y\in\mathbb{R}^n\setminus B_r$
		$$
		|y-x_0|\leq \left(1+\frac{|x-x_0|}{|y-x|}\right)|y-x|\leq 4|y-x|.
		$$
		
		On the other hand, with the help of \eqref{0-3} and \eqref{1-5}, we get
		\begin{align*}
		I_{22}
		&=2l\int_{B_R\setminus B_r}\int_{B_r}g\left(\frac{|u(y)-u(x)|}{|x-y|^s}\right)\chi_{\{u(y)<l\}}\varphi^q(x)\frac{K(x,y)}{|x-y|^s}\,dxdy\\
		&\quad+2l\int_{\mathbb{R}^n\setminus B_R}\int_{B_r}g\left(\frac{|u(y)-u(x)|}{|x-y|^s}\right)\chi_{\{u(y)<l\}}\varphi^q(x)\frac{K(x,y)}{|x-y|^s}\,dxdy\\
		&\leq 2l\int_{B_R\setminus B_r}\int_{B_r}g\left(\frac{l}{|x-y|^s}\right)\varphi^q(x)\frac{K(x,y)}{|x-y|^s}\,dxdy\\
		&\quad+2l\int_{\mathbb{R}^n\setminus B_R}\int_{B_r}g\left(\frac{l+u_-(y)}{|x-y|^s}\right)\varphi^q(x)\frac{K(x,y)}{|x-y|^s}\,dxdy\\
		&\leq Cl\int_{\mathbb{R}^n\setminus B_r}\int_{B_\frac{3r}{4}}g\left(\frac{l}{|x-y|^s}\right)\,\frac{dxdy}{|x-y|^{n+s}}+Cl\int_{\mathbb{R}^n\setminus B_R}\int_{B_r}g\left(\frac{u_-(y)}{|x-y|^s}\right)\,\frac{dxdy}{|x-y|^{n+s}}\\
		&\leq Clr^{-s}g(l/r^s)|B_r|+Cl|B_r|\mathrm{Tail}(u_-;x_0,R).
		\end{align*}
		As a result,
		\begin{equation}
		\label{3-2}
		I_2\geq Cl|B_r|\mathrm{Tail}(u_+;x_0,r)-Clr^{-s}g(l/r^s)|B_r|-Cl|B_r|\mathrm{Tail}(u_-;x_0,R).
		\end{equation}
		
		Next it remains to deal with the integral $I_1$. Set $v:=u-2l$. Suppose, without loss of generality, that $\varphi(x)\geq\varphi(y)$. Then $\varphi^q(x)-\varphi^q(y)\leq q\varphi^{q-1}(x)(\varphi(x)-\varphi(y))$. For $(x,y)\in B_r\times B_r$, we in turn employ the inequalities \eqref{0-3}, \eqref{1-2}-\eqref{1-4} to arrive at
		\begin{align*}
		&\quad g\left(\frac{|u(x)-u(y)|}{|x-y|^s}\right)\frac{u(x)-u(y)}{|u(x)-u(y)|}(v(x)\varphi^q(x)
		-v(y)\varphi^q(y))\frac{1}{|x-y|^s}\\
		&=g\left(\frac{|v(x)-v(y)|}{|x-y|^s}\right)\frac{|v(x)-v(y)|}{|x-y|^s}\varphi^q(x)\\
		&\quad+
		g\left(\frac{|v(x)-v(y)|}{|x-y|^s}\right)\frac{u(x)-u(y)}{|u(x)-u(y)|}\frac{\varphi^q(x)
			-\varphi^q(y)}{|x-y|^s}v(y)\\
		&\geq pG\left(\frac{|v(x)-v(y)|}{|x-y|^s}\right)\varphi^q(x)-qg\left(\frac{|v(x)-v(y)|}{|x-y|^s}\right)\varphi^{q-1}(x)\frac{|\varphi(x)-\varphi(y)|}{|x-y|^s}|v(y)|\\
		&\geq pG\left(\frac{|v(x)-v(y)|}{|x-y|^s}\right)\varphi^q(x)-\epsilon qG^*\left(g\left(\frac{|v(x)-v(y)|}{|x-y|^s}\right)\varphi^{q-1}(x)\right)\\
		&\quad-\epsilon^{q-1}qG\left(\frac{|\varphi(x)-\varphi(y)|}{|x-y|^s}|v(y)|\right)\\
		&\geq pG\left(\frac{|v(x)-v(y)|}{|x-y|^s}\right)\varphi^q(x)-\epsilon qG\left(\frac{|v(x)-v(y)|}{|x-y|^s}\right)\varphi^q(x)\\
		&\quad-\epsilon^{q-1}qG\left(\frac{|\varphi(x)-\varphi(y)|}{|x-y|^s}|v(y)|\right)\\
		&=\frac{p}{2}G\left(\frac{|v(x)-v(y)|}{|x-y|^s}\right)\varphi^q(x)-CG\left(\frac{|\varphi(x)-\varphi(y)|}{|x-y|^s}|v(y)|\right)\\
		&\geq -CG\left(l\frac{|\varphi(x)-\varphi(y)|}{|x-y|^s}\right).
		\end{align*}
		Here we need note $\varphi^{q-1}(x)\leq1$ and take $\epsilon=\frac{p}{2q}$. From this, we find that
		\begin{align}
		\label{3-3}
		I_1&\geq-C\int_{B_r}\int_{B_r}G\left(l\frac{|\varphi(x)-\varphi(y)|}{|x-y|^s}\right)\,\frac{dxdy}{|x-y|^n} \nonumber\\
		&\geq-C\int_{B_r}\int_{B_r}G\left(\frac{l}{r^s}\left(\frac{|x-y|}{r}\right)^{1-s}\right)\,\frac{dxdy}{|x-y|^n} \nonumber\\
		&\geq-C\int_{B_r}\int_{B_r}r^{(s-1)p}G(l/r^s)\,\frac{dxdy}{|x-y|^{n-(1-s)p}} \nonumber\\
		&\geq -CG(l/r^s)|B_r|.
		\end{align}
		Consequently, it holds, by combing \eqref{3-2}, \eqref{3-3} with \eqref{3-1}, that
		\begin{align*}
		\mathrm{Tail}(u_+;x_0,r)&\leq C\mathrm{Tail}(u_-;x_0,R)+C\frac{1}{l}G(l/r^s)\\
		&\leq Cr^{-s}g(l/r^s)+C\mathrm{Tail}(u_-;x_0,R).
		\end{align*}
		
		Finally, observe that for $a,b\geq0$ and $c\geq1$,
		\begin{align*}
		&a+b=g(g^{-1}(a))+g(g^{-1}(b))\leq2g(g^{-1}(a)+g^{-1}(b))\\
		\Rightarrow \ & g^{-1}\left(\frac{a+b}{2}\right)\leq g^{-1}(a)+g^{-1}(b)
		\end{align*}
		and
		\begin{equation}
		\label{3-3-1}
		g^{-1}(c^{-1}a)\geq (qc/p)^{-\frac{1}{p-1}}g^{-1}(a).
		\end{equation}
		Otherwise, by \eqref{0-3}, \eqref{1-1} and the strictly increasing property of $g$,
		\begin{align*}
		c^{-1}a&<g\left((qc/p)^{-\frac{1}{p-1}}g^{-1}(a)\right)\\
		&\leq q\frac{G\left((qc/p)^{-\frac{1}{p-1}}g^{-1}(a)\right)}{(qc/p)^{-\frac{1}{p-1}}g^{-1}(a)}\\
		&\leq q(qc/p)^{-1}\frac{G(g^{-1}(a))}{g^{-1}(a)}\leq c^{-1}a,
		\end{align*}
		which is a contradiction.
		Then we have
		\begin{align*}
		g^{-1}((2C)^{-1}r^s\mathrm{Tail}(u_+;x_0,r))&\leq g^{-1}\left(\frac{g(l/r^s)+r^s\mathrm{Tail}(u_-;x_0,R)}{2}\right)\\
		&\leq\frac{l}{r^s}+g^{-1}(r^s\mathrm{Tail}(u_-;x_0,R))
		\end{align*}
		and
		$$
		g^{-1}((2C)^{-1}r^s\mathrm{Tail}(u_+;x_0,r))\geq \frac{1}{C}g^{-1}(r^s\mathrm{Tail}(u_+;x_0,r)),
		$$
		which means the desired result.
	\end{proof}
	
	In order to infer Harnack inequality for Eq. \eqref{main}, we need the following local boundedness result on weak subsolutions that is a slightly modified version of \cite[Theorem 4.4]{BKO21}.
	
	\begin{lemma}
		\label{lem3-2}
		Let $B_r(x_0)\subset\subset\Omega$. Assume that $u\in \mathbb{W}^{s,G}(\Omega)\cap L^g_s(\mathbb{R}^n)$ is a weak subsolution to Eq. \eqref{main}. Then there holds that
		\begin{equation}
		\label{3-4}
		\sup_{B_\frac{r}{2}}u\leq Cr^sG^{-1}\left(\delta^\frac{\theta}{1-\theta}\mint_{B_r}G\left(\frac{u_+}{r^s}\right)\,dx\right)
		+\left(\frac{r}{2}\right)^sg^{-1}\left(\delta \left(\frac{r}{2}\right)^s\mathrm{Tail}\left(u_+;x_0,\frac{r}{2}\right)\right),
		\end{equation}
		where $C$ depends on $n,p,q,s,\Lambda$.
	\end{lemma}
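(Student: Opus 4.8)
The plan is to run a De Giorgi iteration, following the proof of \cite[Theorem 4.4]{BKO21}; the only new ingredient is that the nonlocal tail is separated with the free parameter $\delta$, and this is exactly what produces the loss $\delta^{\theta/(1-\theta)}$ in front of the energy term together with the $\delta$ inside the $\mathrm{Tail}$ term of \eqref{3-4}.

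\emph{Set-up.} Fix a level $M>0$, to be chosen at the end, and for $j=0,1,2,\dots$ put
$$
k_j=M\Bigl(1-\frac{1}{2^{j+1}}\Bigr),\qquad r_j=\frac{r}{2}+\frac{r}{2^{j+1}},\qquad \tilde r_j=\frac{r_j+r_{j+1}}{2},\qquad w_j=(u-k_j)_+,
$$
and pick cut-off functions $\varphi_j\in C^\infty_0(B_{\tilde r_j}(x_0))$ with $0\le\varphi_j\le1$, $\varphi_j\equiv1$ on $B_{r_{j+1}}(x_0)$ and $|\nabla\varphi_j|\le c\,2^{j}/r$. Since $r_0=r$, $r_j\downarrow r/2$ and $k_j\uparrow M$, the aim is to show that $A_j:=|B_{r_j}(x_0)\cap\{u>k_j\}|/|B_{r_j}(x_0)|\to0$, which forces $(u-M)_+=0$ a.e.\ in $B_{r/2}(x_0)$, i.e.\ $\sup_{B_{r/2}}u\le M$.

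\emph{Energy estimate.} Test the weak subsolution inequality \eqref{1-6} with $\eta=w_{j+1}\varphi_j^q$ and split it, as in \eqref{3-1}, into the local part $I_1$ over $B_{r_j}(x_0)\times B_{r_j}(x_0)$ and the nonlocal part $I_2$ over $(\mathbb{R}^n\setminus B_{r_j}(x_0))\times B_{r_j}(x_0)$. The bound for $I_1$ is a Caccioppoli computation of exactly the type carried out for $I_1$ in the proofs of Lemma \ref{lem3-1} and Proposition \ref{pro2-1} (pointwise inequality via \eqref{0-3}, \eqref{1-2}--\eqref{1-5} and Young's inequality \eqref{1-3}, then $|\varphi_j(x)-\varphi_j(y)|\le c\,2^{j}|x-y|/r$ and \eqref{1-1}); it gives
$$
I_1\le-\frac{1}{C}\int_{B_{r_{j+1}}}\int_{B_{r_{j+1}}}G\Bigl(\frac{|w_{j+1}(x)-w_{j+1}(y)|}{|x-y|^s}\Bigr)\frac{dx\,dy}{|x-y|^n}+C\,2^{qj}\int_{B_{r_j}}G\Bigl(\frac{w_j}{r^s}\Bigr)dx.
$$
For $I_2$, using $0\le w_{j+1}(x)\le u_+(x)$ on $\{\varphi_j>0\}$, the comparison $|y-x_0|\le2^{j+4}|y-x|$ for $x\in\mathrm{supp}\,\varphi_j$ and $y\notin B_{r_j}(x_0)$, and \eqref{1-5}, one bounds $|I_2|$ by $C\,2^{\gamma j}|B_{r_j}\cap\{u>k_{j+1}\}|$ times $\bigl(r^{-s}g(M/r^s)+\mathrm{Tail}(u_+;x_0,r/2)\bigr)$. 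Since $M\ge(r/2)^sg^{-1}\!\bigl(\delta(r/2)^s\mathrm{Tail}(u_+;x_0,r/2)\bigr)$ by the choice of $M$ below, the strict monotonicity of $g$ gives $\mathrm{Tail}(u_+;x_0,r/2)\le C\delta^{-1}r^{-s}g(M/r^s)$, so the whole factor collapses to $C\delta^{-1}r^{-s}g(M/r^s)$; then $w_j\ge(k_{j+1}-k_j)\chi_{\{u>k_{j+1}\}}=2^{-j-2}M\chi_{\{u>k_{j+1}\}}$ together with \eqref{1-1} and $tg(t)\approx G(t)$ from \eqref{0-3} lets one pass freely between $|B_{r_j}\cap\{u>k_{j+1}\}|$ and $\int_{B_{r_j}}G(w_j/r^s)\,dx$, and also between the latter and $|B_{r_j}|\,G(M/r^s)A_j$.

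\emph{Iteration and choice of $M$.} Applying the fractional Sobolev--Orlicz inequality \cite[Lemma 4.1]{BKO21} to the left-hand side of the combined estimate (gaining the exponent $\theta=\theta(n,s)>1$ as in \eqref{2-7}--\eqref{2-9}) and controlling the ratios of $G$-values on the super-level sets by the $\Delta_2$ condition, one reaches a recursion $A_{j+1}\le C\,\delta^{-\alpha}b^{j}A_j^{\theta}$ with $b,\alpha>0$ depending on $n,p,q,s$. By the iteration lemma \cite[Lemma 7.1]{Giu}, $A_j\to0$ provided $A_0\le c_*\delta^{\alpha/(\theta-1)}$ for some small $c_*=c_*(n,p,q,s)$. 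Now set $M$ equal to the right-hand side of \eqref{3-4} with the constant $C$ chosen large; Chebyshev's inequality and \eqref{1-1} give $A_0\le C\,G(M/r^s)^{-1}\mint_{B_r}G(u_+/r^s)\,dx$, and since \eqref{1-1} yields $G(M/r^s)\ge C^p\delta^{\theta/(1-\theta)}\mint_{B_r}G(u_+/r^s)\,dx$ the right-hand side is $\le C^{1-p}\delta^{\theta/(\theta-1)}\le c_*\delta^{\alpha/(\theta-1)}$ for $C$ large enough — the power $\theta/(1-\theta)$ of $\delta$ inside $G^{-1}$ in \eqref{3-4} being precisely what is needed so that this factor absorbs the $\delta^{-\alpha}$ loss coming from the tail estimate. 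Hence $A_j\to0$, which gives \eqref{3-4}. The main obstacle is the bookkeeping forced by the general $N$-function $G$: one cannot pull constants through $G$ as in the power case, so the pointwise Caccioppoli inequality, the passage between $|B_{r_j}\cap\{u>k_j\}|$ and $\int G(w_j/r^s)$, the recursion constants, and the calibration of $M$ must all be handled using only \eqref{0-3}, \eqref{1-1}, \eqref{1-5} and the $\Delta_2/\nabla_2$ conditions; and keeping the tail in the sharp form $(r/2)^sg^{-1}(\delta(r/2)^s\mathrm{Tail}(u_+;x_0,r/2))$ rather than a cruder $G^{-1}$-version requires the same $g$-versus-$g^{-1}$ manipulations as at the end of the proof of Lemma \ref{lem3-1}.
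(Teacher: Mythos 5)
Your proposal is correct and follows essentially the same route as the paper: the paper simply runs the De Giorgi iteration of \cite[Theorem 4.4]{BKO21}, imposing the threshold $k\geq(\tfrac r2)^sg^{-1}(\delta(\tfrac r2)^s\mathrm{Tail}(u_+;x_0,\tfrac r2))$ so that the tail contributes a factor $(1+1/\delta)^\theta\leq 2^\theta\delta^{-\theta}$ to the recursion, and then calibrates $k$ with the factor $\delta^{\theta/(1-\theta)}$ inside $G^{-1}$ exactly as you calibrate $M$. Your reconstruction of the Caccioppoli estimate, the tail absorption via monotonicity of $g$, and the smallness condition for the iteration lemma matches the paper's (largely deferred) argument, with the loss exponent you call $\alpha$ being $\theta$ in the actual computation.
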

	
	\begin{proof}
		The process is the same as that of \cite[Theorem 4.4]{BKO21}. Let us point out that the notations below adopt identically those in \cite[Theorem 4.4]{BKO21}. We just need to notice that, after the inequality (4.14) in \cite{BKO21}, the parameter $k$ is first chosen so large that
		$$
		k\geq \left(\frac{r}{2}\right)^sg^{-1}\left(\delta \left(\frac{r}{2}\right)^s\mathrm{Tail}\left(u_+;x_0,\frac{r}{2}\right)\right)
		$$
		with $\delta\in(0,1]$, instead of the value of $k$ there. Then the inequality (4.14) in \cite{BKO21} becomes
		$$
		a_{j+1}\leq C2^{j(n+sq+2q)\theta}\left(1+\frac{1}{\delta}\right)^\theta a^\theta_j\leq 2^\theta C2^{j(n+sq+2q)\theta}\delta^{-\theta} a^\theta_j.
		$$
		Let $C_0=2^\theta C$ and $B=2^{(n+sq+2q)\theta}$. Then
		$$
		a_{j+1}\leq(\delta^{-\theta}C_0)B^ja^\theta_j.
		$$
		By the iteration lemma (see, e.g. \cite[Lemma 7.1]{Giu}), we need require
		$$
		a_0\leq (\delta^{-\theta}C_0)^{-\frac{1}{\theta-1}}B^{-\frac{1}{(\theta-1)^2}},
		$$
		namely,
		$$
		\frac{\mint_{B_r}G\left(\frac{u_+}{r^s}\right)\,dx}{G(k/r^s)}\leq (\delta^{-\theta}C_0)^{-\frac{1}{\theta-1}}B^{-\frac{1}{(\theta-1)^2}},
		$$
		so that $a_j\rightarrow0$ as $j\rightarrow\infty$. Now we pick
		$$
		k=r^sG^{-1}\left(\delta^\frac{\theta}{1-\theta}C_0^{\frac{1}{\theta-1}}B^{\frac{1}{(\theta-1)^2}}\mint_{B_r}G\left(\frac{u_+}{r^s}\right)\,dx\right)
		+\left(\frac{r}{2}\right)^sg^{-1}\left(\delta \left(\frac{r}{2}\right)^s\mathrm{Tail}\left(u_+;x_0,\frac{r}{2}\right)\right).
		$$
		Terminally, the limit $\lim_{j\rightarrow\infty}a_j=0$ leads to \eqref{3-4}.
	\end{proof}
	
	Finally, we implement the proof of the nonlocal Harnack inequality stated in Theorem \ref{thm0}. From this procedure, one can apparently understand the reason why we impose the conditions \eqref{0-4} and \eqref{0-5}.
	
	\medskip
	
	\noindent\textbf{Proof of Theorem \ref{thm0}.} For simplicity, let $\lambda=\frac{\theta}{1-\theta}$. Putting together the local boundedness estimate (Lemma \ref{lem3-2}) and the tail estimate (Lemma \ref{lem3-1}), we derive that, for $B_\rho\subset\subset\Omega$,
	\begin{align*}
	\sup_{B_{\frac{\rho}{2}}}u&\leq C\rho^sG^{-1}\left(\delta^\lambda\mint_{B_\rho}G\left(\frac{u_+}{\rho^s}\right)\,dx\right)
	+C\delta^\frac{1}{q-1}\left(\frac{\rho}{2}\right)^sg^{-1}\left( \left(\frac{\rho}{2}\right)^s\mathrm{Tail}\left(u_+;x_0,\frac{\rho}{2}\right)\right)\\
	&\leq C\delta^\frac{1}{q-1}\sup_{B_\rho}u+C\rho^sG^{-1}\left(\delta^\lambda\mint_{B_\rho}G\left(\frac{u_+}{\rho^s}\right)\,dx\right)
	+C\delta^\frac{1}{q-1}\rho^sg^{-1}(\rho^s\mathrm{Tail}(u_-;x_0,R)).
	\end{align*}
	Here we have utilized
	\begin{equation*}
	\begin{cases}
	g^{-1}(at)\leq (q/p)^\frac{1}{p-1}a^\frac{1}{q-1}g^{-1}(t)  &\text{\textmd{for }} 0<a<1, t\geq0,\\[2mm]
	g^{-1}(at)\leq (q/p)^\frac{1}{p-1}a^\frac{1}{p-1}g^{-1}(t)  &\text{\textmd{for }} a\geq1, t\geq0,
	\end{cases}
	\end{equation*}
	which can be justified in a similar way to \eqref{3-3-1}. 
	
	We next would like to apply the iteration \cite[Lemma 1]{GG82} (See also \cite[Lemma 2.7]{DKP14}). Denote $\rho=(\gamma-\gamma')r$ with $\frac{1}{2}\leq\gamma'<\gamma\leq1$. By a covering argument, we obtain
	\begin{align}
	\label{3-5}
	\sup_{B_{\gamma'r}}u&\leq C\frac{r^{s(1-\frac{q}{p})}}{(\gamma-\gamma')^{\frac{n}{p}+s(\frac{q}{p}-1)}}G^{-1}\left(\delta^\lambda\mint_{B_{\gamma r}}G(u)\,dx\right) \nonumber\\
	&\quad+C\delta^\frac{1}{q-1}r^sg^{-1}(r^s\mathrm{Tail}(u_-;x_0,R))+C\delta^\frac{1}{q-1}\sup_{B_{\gamma r}}u,
	\end{align}
	where we note the positivity of $u$ in $B_R(x_0)$. Now making use of \eqref{0-4} and \eqref{0-5}, we evaluate, for any $\varepsilon\in(0,1)$,
	\begin{align}
	\label{3-6}
	&\quad G^{-1}\left(\delta^\lambda\mint_{B_{\gamma r}}G(u)\,dx\right) \nonumber\\
	&\leq G^{-1}\left(c_0\delta^\lambda\mint_{B_{\gamma r}}G(u^{1-\varepsilon})G(u^\varepsilon)\,dx\right) \nonumber\\
	&\leq CG^{-1}\left(\delta^\lambda\mint_{B_{\gamma r}}G\left(\left(\sup_{B_{\gamma r}}u\right)^{1-\varepsilon}\right)G(u^\varepsilon)\,dx\right) \nonumber\\
	&\leq C(\delta)\left(\sup_{B_{\gamma r}}u\right)^{1-\varepsilon}\max\left\{\left(\mint_{B_{\gamma r}}G(u^\varepsilon)\,dx\right)^\frac{1}{p},\left(\mint_{B_{\gamma r}}G(u^\varepsilon)\,dx\right)^\frac{1}{q}\right\}.
	\end{align}
	Via selecting $\delta=\left(\frac{1}{4C}\right)^{q-1}$, merging the displays \eqref{3-5}, \eqref{3-6} and an application of Young's inequality, we have
	\begin{align*}
	\sup_{B_{\gamma'r}}u&\leq \left(\sup_{B_{\gamma r}}u\right)^{1-\varepsilon}\frac{Cr^{s(1-\frac{q}{p})}}{(\gamma-\gamma')^{\frac{n}{p}+s(\frac{q}{p}-1)}}\max\left\{\left(\mint_{B_{\gamma r}}G(u^\varepsilon)\,dx\right)^\frac{1}{p},\left(\mint_{B_{\gamma r}}G(u^\varepsilon)\,dx\right)^\frac{1}{q}\right\}\\
	&\quad+\frac{1}{4}\sup_{B_{\gamma r}}u+Cr^sg^{-1}(r^s\mathrm{Tail}(u_-;x_0,R))\\
	&\leq \frac{1}{2}\sup_{B_{\gamma r}}u+\frac{Cr^{s(1-\frac{q}{p})\frac{1}{\varepsilon}}}{(\gamma-\gamma')^{[\frac{n}{p}+s(\frac{q}{p}-1)]\frac{1}{\varepsilon}}}
	\left[\max\left\{\left(\mint_{B_r}G(u^\varepsilon)\,dx\right)^\frac{1}{p},\left(\mint_{B_{r}}G(u^\varepsilon)\,dx\right)^\frac{1}{q}\right\}\right]^\frac{1}{\varepsilon}\\
	&\quad+Cr^sg^{-1}(r^s\mathrm{Tail}(u_-;x_0,R)).
	\end{align*}
	We can apply \cite[Lemma 1]{GG82} to infer that
	\begin{align*}
	\sup_{B_r}u&\leq Cr^{s(1-\frac{q}{p})\frac{1}{\varepsilon}}
	\left[\max\left\{\left(\mint_{B_r}G(u^\varepsilon)\,dx\right)^\frac{1}{p},\left(\mint_{B_{r}}G(u^\varepsilon)\,dx\right)^\frac{1}{q}\right\}\right]^\frac{1}{\varepsilon}\\
	&\quad+Cr^sg^{-1}(r^s\mathrm{Tail}(u_-;x_0,R)).
	\end{align*}
	In order to make use of Lemma \ref{lem2-3}, we have to impose the conditions that $G(t)\leq c_1\max\{t^p,t^q\}$. 
	
	We proceed by considering the integral term in the above display,
	\begin{align*}
	\mint_{B_r}G(u^\varepsilon)\,dx&=\frac{1}{|B_r|}\left(\int_{B_r\cap\{u<1\}}G(u^\varepsilon)\,dx+\int_{B_r\cap\{u\geq1\}}G(u^\varepsilon)\,dx\right)\\
	&\leq \frac{c_1}{|B_r|}\left(\int_{B_r}u^{p\varepsilon}\,dx+\int_{B_r}u^{q\varepsilon}\,dx\right)\\
	&\leq c_1\left(\mint_{B_r}u^{q\varepsilon}\,dx\right)^\frac{p}{q}+c_1\mint_{B_r}u^{q\varepsilon}\,dx.
	\end{align*}
	Combining the last two displays and choosing $\varepsilon=\frac{\epsilon}{q}$ with $\epsilon$ given by Lemma \ref{lem2-3}, we finally arrive at
	\begin{align*}
	\sup_{B_r}u
	&\leq Cr^{s(1-\frac{q}{p})\frac{q}{\epsilon}}\max_{i\in\{\frac{1}{\epsilon},\frac{q}{p\epsilon}\}}
	\Bigg\{\left(\inf_{B_r}u+r^sg^{-1}(r^s\mathrm{Tail}(u_-;x_0,R))\right)^\frac{ip\epsilon}{q}\\
	&\qquad\qquad\qquad\qquad\qquad+\left(\inf_{B_r}u+r^sg^{-1}(r^s\mathrm{Tail}(u_-;x_0,R))\right)^{i\epsilon}\Bigg\}\\
	&\quad+Cr^sg^{-1}(r^s\mathrm{Tail}(u_-;x_0,R))\\
	&\leq Cr^{s(1-\frac{q}{p})\frac{q}{\epsilon}}\max_{\iota\in\{1,\frac{q}{p},\frac{p}{q}\}}\left\{\left(\inf_{B_r}u+r^sg^{-1}(r^s\mathrm{Tail}(u_-;x_0,R))\right)^\iota\right\}\\
	&\quad+Cr^sg^{-1}(r^s\mathrm{Tail}(u_-;x_0,R)).
	\end{align*}
	The proof is complete now.

	\section*{Acknowledgments}
	
	This work was supported by the National Natural Science Foundation of China (No. 12071098).

\end{document}